\documentclass[english,15pt]{article}
\usepackage{hyperref}
\usepackage[T1]{fontenc}
\usepackage[latin1]{inputenc}
\usepackage{geometry}
\usepackage{float}
\usepackage{mathrsfs}
\usepackage{amsmath}
\usepackage{amssymb}
\usepackage{graphicx}
\usepackage{subcaption}

\hypersetup{
    colorlinks=true,
    linkcolor=blue,
    filecolor=magenta,      
    urlcolor=cyan,
    citecolor = blue,
}

\makeatletter

\usepackage{algorithm}
 \usepackage{algorithmicx}

\usepackage{todonotes}

\usepackage{algorithm,algpseudocode}

\usepackage{amsthm}

\usepackage{mathrsfs}

\usepackage{amsfonts}

\usepackage{epsfig}

\usepackage{bm}

\usepackage{mathrsfs}

\usepackage{enumerate}

\numberwithin{equation}{section}

\@ifundefined{definecolor}{\@ifundefined{definecolor}
 {\@ifundefined{definecolor}
 {\usepackage{color}}{}
}{}
}{}

\newtheorem{theorem}{Theorem}[section]
\newtheorem{lem}{Lemma}[section]

\newcounter{hypA}
\newenvironment{hypA}{\refstepcounter{hypA}\begin{itemize}
  \item[({\bf A\arabic{hypA}})]}{\end{itemize}}
\newcounter{hypB}

\newcounter{hypD}

\usepackage{babel}\date{}

\usepackage{babel}

\makeatother

\usepackage{babel}

\begin{document}

\begin{center}

{\Large \textbf{Parameter Estimation for Partially Observed Time-Changed SDEs}}

\vspace{0.5cm}

  KE ZHAO,  \& AJAY JASRA

{\footnotesize School of Data Science,  The Chinese University of Hong Kong,  Shenzhen,  Shenzhen, CN.}\\
{\footnotesize E-Mail:\,} \texttt{\emph{\footnotesize  kezhao@link.cuhk.edu.cn; ajayjasra@cuhk.edu.cn
}}

\end{center}

\begin{abstract}
In this paper we consider the parameter estimation problem associated to partially-observed time changed SDEs,  with observations that are given at discrete times.  In particular we consider both likelihood and Bayesian estimation.
We develop new Markov chain Monte Carlo (MCMC) algorithms which allow an unbiased score-based stochastic approximation method to provide likelihood-type parameter estimators.   We also use a variant of this MCMC algorithm to perform multilevel-based Bayesian parameter estimation.  We prove that this latter method achieves a mean square error of $\mathcal{O}(\epsilon^2)$ ($\epsilon>0$) with a cost of $\mathcal{O}(\epsilon^{-2}\log(\epsilon)^2)$.  Our methodologies are tested numerically on both simulated and real data. 
\\
\noindent\textbf{Keywords}:  Time Changed SDEs, Score-based Estimation,  Bayesian Estimation, Unbiased Estimation.
\end{abstract}

\section{Introduction}

Time-changed stochastic differential equations (SDEs) provide a flexible modeling framework for systems exhibiting irregular temporal dynamics, such as sub-diffusion, trapping effects, and heavy-tailed waiting times. In these models, the physical time is replaced by a stochastic time change, typically given by the inverse of an increasing L\'evy process, leading to dynamics that deviate significantly from classical diffusion behavior. Such constructions have been widely used to model anomalous transport phenomena in a range of applications, including physics \cite{metzler2000random,zaslavsky1994fractional}, finance \cite{magdziarz2009black,zhang2025sub}, and biological systems \cite{hairer2016averaging,hairer2018fractional}.  From an analytical perspective,  these processes are closely connected to fractional and nonlocal evolution equations, while probabilistically they often arise as scaling limits of continuous-time random walks \cite{biovcic2026sampling}. In recent years, there has been a growing body of work on numerical methods for such models, including simulation techniques for time-changed processes; see, for example, \cite{cazares2025fast,gonzalez2025fast, lv2020stochastic,biovcic2026sampling} and references therein.

In this article we consider partially observed time-changed SDEs associated to discretely observed data and in particular the estimation of static parameters.  This type of problem,  in terms of regular SDE or SDEs of McKean-Vlasov type as recieved a substanial amount of attention including, but not limited to, the works \cite{awadelkarim2024unbiased,beskos,chada,golightly-sherlock,golightly,graham,heng2024unbiased,jasra,ml_mv_sde}.  We consider both the cases of likelihood-based inference, using the score function (see also \cite{beskos}) and the Bayesian parameter estimation problem (e.g.~\cite{jasra}). The main challenges associated to parameter estimation for the afore-mentioned SDE problems are first that even if the transition densities of the process are analytically available,  the probability density of the skeleton of the SDE,  conditional upon the data is only available up-to a normalizing constant and second that the transition density is not available,  hence one has to resort to time-discretization methods such as Euler-Maruyama.
This has lead to a substantial literature that has considered how to combine conventional score-based likelihood estimation methods with multilevel Monte Carlo \cite{giles2008multilevel} and advanced simulation techniques such as particle filters and Markov chain Monte Carlo (MCMC) see e.g.~\cite{awadelkarim2024unbiased,beskos}.  
Indeed \cite{awadelkarim2024unbiased} construct a score-based approach which is unbiased,  in that the expected value of the estimator produced is equal to the true optimizer of the likelihood the latter of which is for the exact, not time-discretized model. 
The Bayesian parameter estimation problem has also lead to similar contributions such as in \cite{jasra,ml_mv_sde}.

Given the substantial literature for parameter estimation we consider how this can be developed for the case 
of partially observed time-changed SDEs.  As we explain later in the article it is not simply a matter of taking the existing methodology and applying it,  as one must be careful to tailor such ideas to the context here.  In this article we make several contributions to the literature as we now state.   Based on the Girsanov formula we provide an expression for the score function of the model and then develop new particle simulation-based methods to potentially provide unbiased estimators of the parameters.  This new algorithm is related to \cite{awadelkarim2024unbiased},  but requires several subtle modifications for the approach to work.  We note that by unbiased we mean that the expected value of the estimator is the true optimizer of the likelihood without time-discretization bias; we do not prove this result, although we expect it follows from the work of \cite{awadelkarim2024unbiased}.  We then show how the approach developed for score-based parameter estimation can be adapted to the case of Bayesian parameter estimation; this latter approach is essentially that of \cite{jasra},  except with a new MCMC method.  We prove that our estimator can achieve a mean square error (MSE) of $\mathcal{O}(\epsilon^2)$ ($\epsilon>0$) for a cost of $\mathcal{O}(\epsilon^{-2}\log(\epsilon)^2)$, which is close to optimal.  We then illustrate our two approaches on both simulated and real data.

The remainder of the paper is organized as follows.  Section \ref{sec:problem} introduces the problem formulation. Section \ref{sec:approach} details the methodology and construction of our algorithms.  Section \ref{sec:theory} presents the main theoretical results and Section \ref{sec:numerics} provides numerical experiments. Technical proofs are given in Appendix \ref{app:proof}.

\section{Problem formulation}\label{sec:problem}

\subsection{Model}

For any $\theta \in \Theta \subset \mathbb{R}^{d_\theta}$
let $(\Omega, \mathcal{F}, \mathbb{P}_{\theta})$ be a probability space. We consider a continuous-time latent process $\{X_t\}_{t \geq 0}$ evolving according to a time-changed SDE:
\begin{equation}
\label{eq:tcsde}
dX_t = a_\theta(X_t)\,dL_t + \sigma(X_t)\,dB_{L_t}, 
\quad X_0 = x_0 \in \mathbb{R}^{d_x}.
\end{equation}
In the above display,  
$\{B_t\}_{t \geq 0}$ is a standard $d_x$-dimensional Brownian motion, and let $\{D_t\}_{t \geq 0}$ be a sub-ordinator (i.e., a non-decreasing L\'evy process) independent of $B_t$.  We define the inverse sub-ordinator
$$
L_t = \inf\{ s \geq 0 : D_s > t \}, \quad t \geq 0,
$$
which is an increasing, non-Markovian process that induces random time changes. The process $\{B_{L_t}\}_{t \geq 0}$ is then a time-changed Brownian motion. Throughout we make the following assumption  which ensures a unique strong solution. 
\begin{hypA}\label{assump:coefficients}
For $\theta\in\Theta$,  let $a_{\theta} : \mathbb{R}^{d_x} \to \mathbb{R}^{d_x}$ and
$\sigma : \mathbb{R}^{d_x} \to \mathbb{R}^{d_x\times d_x}$ be measurable functions.
There exists $C<+\infty$ such that for 
$(\theta,x,y) \in \Theta\times\mathbb{R}^{2d_x}$ 
\begin{eqnarray*}
|a_{\theta}(x) - a_{\theta}(y)| + |\sigma(x) - \sigma(y)|
&\le & C|x - y|, \label{ineq:Lip1}\\
|a_{\theta}(x)| + |\sigma(x)|
&\le & C\big(1 + |x|\big), \label{ineq:Lip2}\\
|a_{\theta}(x) - a_{\theta}(x)| + |\sigma(x) - \sigma(x)|
&\le & C\big(1 + |x|\big),
\end{eqnarray*}
where $|\cdot|$ denotes the Euclidean norm
in the corresponding space.
\end{hypA}
A detailed discussion of stochastic integrals and SDEs driven by time-changed semimartingales is provided in \cite{kobayashi2011stochastic} including the existence and uniqueness of the solution.  We suppose that $X_0$ is stochastic with a Lebesgue probability density $\mu_{\theta}$.

The latent process is observed at discrete times $T\in\mathbb{N}$ unit times; this convention is only used to simplify the notation and one can easily extend the discussion to the case of (possibly irregular) observation times $0<t_1<\cdots<t_K=T$ say.
 For each $k \in \{1,\dots,T\}$, the observation $Y_k$ takes values in a measurable space $(\mathsf{Y}, \mathcal{Y})$, and its conditional distribution given the latent state $X_{k} = x$ admits a density 
$
G_\theta(x, y_k)$, 
with respect to a reference measure (usually counting or Lebesgue) on $(\mathsf{Y}, \mathcal{Y})$. We remark that our approach to parameter estimation will rely on the Girsanov and gradient (score) methods which will circumvent allowing $\sigma$ in \eqref{eq:tcsde} to depend on $\theta$.  We discuss this in Section \ref{sec:sbpe}.

\subsection{Score-Based Parameter Estimation}

The exposition in this section is similar to \cite[Section 3.1]{awadelkarim2024unbiased} with an appropriate modification of results to the context in this article. 
Let $\{X_t\}_{t \in [0,T]}$ be the solution to \eqref{eq:tcsde}. For a given test function $\varphi_\theta : \mathbb{R}^{d_x \times K} \to \mathbb{R}$ defined on a finite skeleton $\{1,2,...,T\}$, we consider
\[
\varphi_\theta(X_{1}, \dots, X_{T}).
\]
Assuming integrability, the expectation under $\mathbb{P}_\theta$ can be expressed using a reference measure $\mathbb{Q}$ via Girsanov's theorem:
\[
\mathbb{E}_\theta[\varphi_\theta(X_{1}, \dots, X_{T})]
= \mathbb{E}_{\mathbb{Q}\otimes\text{Leb}} \Bigg[
\mu_\theta(X_0)\varphi_\theta(X_{1}, \dots, X_{T})
\frac{d\mathbb{P}_\theta}{d\mathbb{Q}}
\Bigg]
\]
where the Radon--Nikodym derivative is given by \cite{zhang2025sub}
\begin{equation}
\frac{d\mathbb{P}_\theta}{d\mathbb{Q}} =
\exp \Bigg\{
- \frac{1}{2} \int_0^T \|b_\theta(X_s)\|^2 d{L_s}
+ \int_0^T b_\theta(X_s)^\top d\widetilde{W}_{L_s}
\Bigg\},
\end{equation}
\(b_{\theta}(x)=\Sigma(x)^{-1}\sigma(x)^\top a_{\theta}(x)\), \(\Sigma(x)=\sigma(x)\sigma(x)^\top\) and $\mathbb{Q}\otimes\text{Leb}$ is the product measure of $\mathbb{Q}$ (to be described) and Lebesgue measure on $x_0$.  Under $\mathbb{Q}$ the process satisfies
\[
dX_t = \sigma(X_t)d\widetilde{W}_{L_t},
\]
and \(\widetilde{W}_{L_t}\) is independent of \(W_{L_t}\) and has the same distribution as \(W_{L_t}\).
Using the observation model, we define
\[
\varphi_\theta(X_{1:T}) = \prod_{k=1}^T G_\theta(X_k, y_k),
\]
so that the marginal likelihood is
\[
p_\theta(y_{1:T}) = \mathbb{E}_\theta\Bigg[\prod_{k=1}^T G_\theta(X_{k}, y_k)\Bigg].
\]
The objective is to maximize $p_\theta(y_{1:T})$ using the score function $\nabla \log p_\theta(y_{1:T})$ a task which we assume is well-posed; as the nature of this work is primarily methodological we do not specify these standard conditions for the problem to be well-defined.

Define the function:
\begin{align}
H_\theta(\{X_{t}\}_{t\in[0,T]},\{L_{t}\}_{t\in[0,T]}) &= \nabla_\theta \log \mu_\theta(X_0)
+ \sum_{k=1}^T \nabla_\theta \log G_\theta(X_{k}, y_k) \nonumber\\
&\quad - \frac{1}{2} \int_0^T \nabla_\theta \|b_\theta(X_s)\|^2 d{L_s}
+ \int_0^T \nabla_\theta b_\theta(X_s)^\top \Sigma(X_s)^{-1}\sigma(X_s)^\top dX_s.\label{eq:noisy_score_ct}
\end{align}
In addition,  set $\psi_{\theta}(x_1,\dots,x_T)=\prod_{k=1}^T G_{\theta}(x_k,y_k)$ set the probability measure $\pi_{\theta}=\psi_{\theta}\mathbb{P}_{\theta}/\mathbb{P}_{\theta}(\psi_{\theta})$ where $\mathbb{P}_{\theta}(\psi_{\theta})=p_\theta(y_{1:T})$. 
Under standard regularity conditions, the score function admits the representation
\[
\nabla_\theta \log p_\theta(y_{1:T})
= \mathbb{E}_{\pi_\theta}\left[H_\theta(\{X_{t}\}_{t\in[0,T]},\{L_{t}\}_{t\in[0,T]})\right].
\]
Therefore we would like to find $\theta$ so that
\[
h(\theta) := \mathbb{E}_{\pi_\theta}\left[H_\theta(\{X_{t}\}_{t\in[0,T]},\{L_{t}\}_{t\in[0,T]})\right] = 0.
\]

\subsection{Bayesian Parameter Estimation}

The Bayesian approach is rather simpler as we now describe.  Let $\overline{\pi}$ be a Lebesgue probability density on the space $\Theta$ such that $\int_{\Theta}p_{\theta}(y_{1:T})\overline{\pi}(\theta)d\theta<+\infty$, with 
$d\theta$ the $d_{\theta}-$dimensional Lebesgue measure. 
Let $\varphi:\Theta\rightarrow\mathbb{R}$ be a measurable function such that 
$\int_{\Theta}\varphi(\theta)\mathbb{E}_{\theta}[\psi(X_{1:T})]\overline{\pi}(\theta)d\theta<+\infty$.
The objective in Bayesian parameter estimation is then to approximate
\begin{equation}\label{eq:bpe_def}
\pi(\varphi) := \frac{\int_{\Theta}\varphi(\theta)\mathbb{E}_{\theta}[\psi(X_{1:T})]\overline{\pi}(\theta)d\theta}
{\int_{\Theta}p_{\theta}(y_{1:T})\overline{\pi}(\theta)d\theta}
\end{equation}
that is expectations with respect to the marginal posterior on $\Theta$.  We note that in the Bayesian case $\sigma$ can depend on $\theta$ with little difficulty as the approach does not depend on the score-function which has been computed via Girsanov. 


\section{Methodology}\label{sec:approach}

\subsection{On the Inverse Sub-ordinator}

A key component of the time-changed SDE \eqref{eq:tcsde} is the inverse sub-ordinator $\{L_t\}_{t \geq 0}$. In general, simulating $L_t$ exactly is nontrivial due to its non-Markovian nature and the lack of closed-form transition densities. Nevertheless, for certain classes of sub-ordinators (e.g., stable sub-ordinators), exact or numerically exact simulation methods have been developed in the literature; see, e.g., \cite{cazares2025fast,gonzalez2025fast}.
In this work, we assume that one can generate exact samples from the finite-dimensional distributions of $\{L_t\}$ at a prescribed set of time points. We use the notation for $0\leq t_{k-1}<t_k\leq T$
\begin{equation}\label{eq:law of L}
P_{\theta,L}(l_{t_{k-1}},dl_{t_k})    
\end{equation}
for the transition kernel in the exact simulation \(L_t\).

\subsection{Time Discretization}

In practice, in order to perform any type of parameter estimation,  we will have to resort to time discretization.
In this section $\theta\in\Theta$ is fixed.
Let $\texttt{l} \in \mathbb{N}_0$ be a discretization level and define the time step $\Delta_{\texttt{l}} = 2^{-\texttt{l}}$. 
We consider the Euler-Maruyama scheme for $k\in\{0,\dots,T\Delta_{\texttt{l}}^{-1}-1\}$
\begin{equation}
\label{eq:euler}
X_{(k+1)\Delta_{\texttt{l}}}
=
X_{k\Delta_{\texttt{l}}}
+ b_\theta(X_{k\Delta_{\texttt{l}}}) \left(L_{(k+1)\Delta_{\texttt{l}}}-L_{k\Delta_{\texttt{l}}}\right)
+ \sigma(X_{(k+1)\Delta_{\texttt{l}}})
\left(W_{L_{(k+1)\Delta_{\texttt{l}}}} - W_{L_{k\Delta_{\texttt{l}}}}\right),
\end{equation}
where we recall that $X_0$ has distribution $\mu_{\theta}(x_0)dx_0$ and the inverse sub-ordinator follows the transition law \eqref{eq:law of L} with $L_0=0$.

To simplify the notation set for $(k,\texttt{l})\in\{1,\dots,T\}\times\mathbb{N}_{0}$
$$
U_k^{\texttt{l}} = \left((X_{(k-1)+\Delta_{\texttt{l}}},L_{(k-1)+\Delta_{\texttt{l}}}), \dots, (X_k,L_{k})\right)
$$
with $U_0^l = (X_0,L_0)$.
We define the posterior smoother at level $\texttt{l}\in\mathbb{N}_0$
\begin{equation}\label{eq:post_smooth}
\pi_\theta^{\texttt{l}}(du_{0:T}^{\texttt{l}})
\propto
\left\{\prod_{k=1}^{T}
\overline{M}_{\theta,\texttt{l}}(u_{k-1}^{\texttt{l}},du_k^{\texttt{l}})
 G_\theta(x_{k}, y_k)\right\}\overline{\mu}_\theta(du_0)
\end{equation}
where $\overline{\mu}_\theta(du_0)=\mu_{\theta}(dx_0)\delta_{\{0\}}(dl_0)$,  $\delta_{\{0\}}(dl_0)$ the dirac measure on $0$
$$
\overline{M}_{\theta,\texttt{l}}(u_{k-1}^{\texttt{l}},du_k^{\texttt{l}}) = \prod_{s=1}^{\Delta_{\texttt{l}}^{-1}}
M_{\theta,\texttt{l}}^{
}\left(v_{k-1+s\Delta_{\texttt{l}}}, dx_{(k-1)+s\Delta_{\texttt{l}}}\right) P_{\theta,L}(l_{(k-1)+(s-1)\Delta_{\texttt{l}}},dl_{(k-1)+s\Delta_{\texttt{l}}})
$$
$v_{k-1+s\Delta_{\texttt{l}}}=(l_{(k-1)+(s-1)\Delta_{\texttt{l}}},l_{(k-1)+s\Delta_{\texttt{l}}},x_{(k-1)+(s-1)\Delta_{\texttt{l}}})$ and
$M_{\theta,\texttt{l}}$ is the Markov kernel induced by the Euler scheme \eqref{eq:euler}.

We will also need to define a joint probability measure on $(u_{0:T}^{\texttt{l}},u_{0:T}^{\texttt{l}-1})$, $\texttt{l}\in\mathbb{N}$ which is related to \eqref{eq:post_smooth}; this latter probability will be integral in our algorithms. 
To that end, consider an additional $\theta'\in\Theta$ and the well known synchronous coupling of Euler-Maruyama discretizations for $(k,k')\in\{0,\dots,T\Delta_{\texttt{l}}^{-1}-1\}\times\{0,\dots,T\Delta_{\texttt{l-1}}^{-1}-1\}$:
\begin{eqnarray*}
X_{(k+1)\Delta_{\texttt{l}}}
& = &
X_{k\Delta_{\texttt{l}}}
+ b_\theta(X_{k\Delta_{\texttt{l}}}) \left(L_{(k+1)\Delta_{\texttt{l}}}-L_{k\Delta_{\texttt{l}}}\right)
+ \sigma(X_{(k+1)\Delta_{\texttt{l}}})
\left(W_{L_{(k+1)\Delta_{\texttt{l}}}} - W_{L_{k\Delta_{\texttt{l}}}}\right)\\
X_{(k'+1)\Delta_{\texttt{l}-1}}
& = &
X_{k'\Delta_{\texttt{l-1}}}
+ b_{\theta'}(X_{k'\Delta_{\texttt{l}-1}}) \left(L_{(k'+1)\Delta_{\texttt{l}-1}}-L_{k'\Delta_{\texttt{l}-1}}\right)
+ \sigma(X_{(k'+1)\Delta_{\texttt{l}-1}})\times \\ & &
\left(W_{L_{(k'+1)\Delta_{\texttt{l}-1}}} - W_{L_{k'\Delta_{\texttt{l}-1}}}\right).
\end{eqnarray*}
In the above display the inverse sub-ordinator and the Brownian motion are the same across the two levels $\texttt{l}$ and $\texttt{l}-1$,  in the sense that concatenating the fine (level $\texttt{l}$) increments will yield the coarser (level $\texttt{l}-1$) increments. In practice,  this is easily achieved by sampling the inverse sub-ordinator and the Brownian motion on the fine grid and summing increments to obtain the coarse increments.  Setting $\mathbf{u}_k^{\texttt{l}}=(u_k^{\texttt{l}},u_k^{\texttt{l}-1})$ for $k\in\{0,\dots,T\}$ we introduce
$$
\widetilde{M}_{\theta,\theta',\texttt{l}}(\mathbf{u}_{k-1}^{\texttt{l}},d\mathbf{u}_k^{\texttt{l}})  =  
$$
$$
\prod_{s=1}^{\Delta_{\texttt{l}-1}^{-1}}\Bigg\{
\check{M}_{\theta,\theta',\texttt{l}}\left(\left(v_{k-1+(2s-1)\Delta_{\texttt{l}}}^{\texttt{l}},l_{k-1+2s\Delta_{\texttt{l}}}^{\texttt{l}}\right), d\left(
x_{k-1+(2s-1)\Delta_{\texttt{l}}}^{\texttt{l}},x_{k-1+2s\Delta_{\texttt{l}}}^{\texttt{l}}
x_{(k-1)+s\Delta_{\texttt{l}-1}}^{\texttt{l}-1}\right)\right)\otimes
$$
$$
\delta_{\{l_{k-1+2s\Delta_{\texttt{l}}}^{\texttt{l}}\}}(dl_{k-1+s\Delta_{\texttt{l}-1}}^{\texttt{l}-1})\Bigg\}\otimes 
\prod_{s=1}^{\Delta_{\texttt{l}}^{-1}} P_{\theta,L}(l_{(k-1)+(s-1)\Delta_{\texttt{l}}}^{\texttt{l}},dl_{(k-1)+s\Delta_{\texttt{l}}}^{\texttt{l}})
$$
where $\check{M}_{\theta,\theta',\texttt{l}}$ is the transition kernel that is induced by the synchronous coupling of the Euler-Maruyama discretication of the time changed SDE, we use the super-scripts $\texttt{l}$ and $\texttt{l}-1$ to distinguish the skeleton of the level $\texttt{l}$ and $\texttt{l}-1$ paths of the SDE and it should be noted that the inverse sub-ordinator is only sampled at the fine level and concatened at the coarse level to produce the path of inverse sub-ordinators at the coarse level.  For the kernel $\widetilde{M}_{\theta,\theta',\texttt{l}}$ if $\theta=\theta'$ we simply write $\widetilde{M}_{\theta,\texttt{l}}$.

Let $\overline{\mu}_{\theta,\theta'}$ be any coupling of $(\mu_{\theta}(x_0)dx_0,\mu_{\theta'}(x_0')dx_0')$ 
and set $\check{\mu}_{\theta,\theta'}(d\mathbf{u}_0^{\texttt{l}})=
\overline{\mu}_{\theta,\theta'}(d(x_0^{\texttt{l}},x_0^{\texttt{l}-1}))\otimes \delta_{\{0\}}(dl_0^{\texttt{l}})\otimes\delta_{\{0\}}(dl_0^{\texttt{l}-1})$,
then we introduce the following probability measure for $\textrm{l}\in\mathbb{N}$:
\begin{equation}\label{eq:post_smooth_coup}
\check{\pi}_{\theta,\theta'}^{\texttt{l}}\left(d\mathbf{u}_{0:T}^{\texttt{l}}\right)
\propto
\left\{\prod_{k=1}^{T} \widetilde{M}_{\theta,\theta',\texttt{l}}(\mathbf{u}_{k-1}^{\texttt{l}},d\mathbf{u}_k^{\texttt{l}})
 \check{G}_{\theta,\theta'}(x_{k},^{\texttt{l}} x_k^{\texttt{l}-1},y_k)\right\}\check{\mu}_{\theta,\theta'}(d\mathbf{u}_0^{\texttt{l}})
\end{equation}
where $\check{G}_{\theta,\theta'}(x_{k},^{\texttt{l}} x_k^{\texttt{l}-1},y_k) = G_{\theta}(x_k^{\texttt{l}},y_k)+G_{\theta'}(x_k^{\texttt{l}-1},y_k)$.  As above if $\theta=\theta'$ we write $\check{\pi}_{\theta}^{\texttt{l}}$ and 
$\check{G}_{\theta}$.  The two target probabilities \eqref{eq:post_smooth} and \eqref{eq:post_smooth_coup} will be fundamental in our two parameter estimation methodologies.  We now detail how one can construct Markov kernels which can be used to generate approximate samples from  \eqref{eq:post_smooth} and \eqref{eq:post_smooth_coup}.

\subsection{Sampling Algorithms}

We now describe a conditional particle filter (CPF) targeting $\pi_\theta^{\texttt{l}}$ in \eqref{eq:post_smooth}. This construction is essentially that in \cite{andrieu2010particle} which is a Markov kernel that allows one to deliver approximate samples from $\pi_\theta^{\texttt{l}}$.  The simulation of one step is given in Algorithm \ref{alg:cpf}.
Note that in line \ref{alg:ini} i.i.d.~stands for independent and identically distributed.   In line \ref{alg:res} (see also line \ref{alg:grand_res}) the notation $\mathcal{C}\text{at}(\omega_k^1,\dots,\omega_k^N)$ means the categorical probability distribution on $\{1,\dots,N\}$ with probabilities $(\omega_k^1,\dots,\omega_k^N)$.

\begin{algorithm}[h]
\caption{Conditional particle filter at level $\texttt{l}\in\mathbb{N}$ for $\theta\in\Theta$ given.}
\begin{algorithmic}[1]
\State Input reference trajectory $u'_{0:T}$.\label{alg:cpf_input}
\State Initialize particles $U_0^i \stackrel{\text{i.i.d.}}{\sim} \overline{\mu}_\theta$, $i\in\{1,\dots,N-1\}$,  set $U_0^N = u'_0$. \label{alg:ini}

\For{$k\in\{1,\dots,T\}$}\label{alg:cpf_for}
    \State \textbf{Sampling:}
    \For{$i\in\{1,\dots,N-1\}$}
        \State Sample $U_k^i|u_{k-1}^{i} \sim \overline{M}_{\theta,\texttt{l}}(u_{k-1}^{i}, \cdot)$.
    \EndFor
    \State Set $U_k^N = u'_k$.\label{alg:cpf_update}

    \State \textbf{Weighting:}  For $i\in\{1,\dots,N\}$ set
    \[
    \omega_k^i = \frac{G_\theta(x_k^i, y_k)}{\sum_{j=1}^{N} G_\theta(x_k^j, y_k)}.
    \]

    \State \textbf{Resampling:} If $k\leq T-1$,  for $i\in\{1,\dots,N-1\}$ sample $A_k^i$ from $\mathcal{C}\text{at}(\omega_k^1,\dots,\omega_k^N)$ and set $u_{0:k}^i=u_{0:k}^{A_k^{i}}$.\label{alg:res}
\EndFor

\State Sample trajectory index $I$ using $\mathcal{C}\text{at}(\omega_T^1,\dots,\omega_T^N)$ and return $u_{0:T}^I$.\label{alg:grand_res}
\end{algorithmic}
\label{alg:cpf}
\end{algorithm}

%
%
%
%
%
%

In order to obtain approximate samples from \eqref{eq:post_smooth_coup} we consider a conditional delta particle filter in Algorithm \ref{alg:cond_delta_pf}.  The delta particle filter was developed in \cite{chada,jasra} and is essentially like Algorithm \ref{alg:cpf} adapted to the product space nature of the target \eqref{eq:post_smooth_coup}.

\begin{algorithm}[h]
\caption{Conditional Delta Particle Filter at level $\texttt{l} \in \mathbb{N}$ for $(\theta,\theta')\in\Theta^2$ given.}
\begin{algorithmic}[1]

\State \textbf{Input:} reference trajectory $\mathbf{u}_{0:T}^{\prime,\texttt{l}}$

\State Sample $(U_0^{i,\texttt{l}},U_{0}^{i,\texttt{l}-1}) \stackrel{\text{i.i.d.}}{\sim} \check{\mu}_{\theta,\theta'}$,  for $i\in\{1,\dots,N-1\}$.
\State Set $U_0^{N,\texttt{l}} = u_0^{\prime,\texttt{l}}$, \quad $U_0^{N,\texttt{l}-1} = U_0^{\prime,\texttt{l}-1}$, $k=1$.

\For{$k\in\{1,\dots,T\}$}

    \State \textbf{Sampling step:}

    \For{$i\in\{1, \dots, N-1\}$}
        \State Sample $\mathbf{U}_k^{i,\texttt{l}}|\mathbf{u}_{k-1}^{i,\texttt{l}}
\sim \widetilde{M}_{\theta,\theta',\texttt{l}}(\mathbf{u}_{k-1}^{i,\texttt{l}},\cdot)$.
    \EndFor

    \State Set $U_k^{N,\texttt{l}} = u_k^{\prime,\texttt{l}}$,  $U_k^{N,\texttt{l}-1} = u_k^{\prime,\texttt{l}-1}$.



\State \textbf{Weighting:}  For $i\in\{1,\dots,N\}$ set
    \[
    	\check{\omega}_k^i = \frac{\check{G}_{\theta,\theta'}(x_k^{i,\texttt{l}},x_k^{i,\texttt{l}-1},y_k)}{\sum_{j=1}^N
\check{G}_{\theta,\theta'}(x_k^{j,\texttt{l}},x_k^{j,\texttt{l}-1},y_k)}.
    \]
\State \textbf{Resampling:} If $k\leq T-1$,  for $i\in\{1,\dots,N-1\}$ sample $A_k^i$ from $\mathcal{C}\text{at}(\check{\omega}_k^1,\dots,\check{\omega}_k^N)$ and set $\mathbf{u}_{0:k}^{i,\texttt{l}}=\mathbf{u}_{0:k}^{A_k^{i},\texttt{l}}$.

%
    \EndFor
\State Sample trajectory index $I$ using $\mathcal{C}\text{at}(\check{\omega}_T^1,\dots,\check{\omega}_T^N)$ and return $\mathbf{u}_{0:T}^{I,\texttt{l}}$.
%
%
%

\end{algorithmic}
\label{alg:cond_delta_pf}
\end{algorithm}


\subsection{Score-Based Parameter Estimation}\label{sec:sbpe}

We now show how the elements described so far can be used for unbiased parameter estimation using the ideas in \cite{awadelkarim2024unbiased}; it is remarked however,  that the method is not identical and we return to this point later in this section.

We begin by specifing the time discretized version of $H_{\theta}$ in \eqref{eq:noisy_score_ct}.  We have for $\texttt{l}\in\mathbb{N}_0$:
\begin{align*}
H_{\theta}^{\texttt{l}}(U_{0:T}^{\textrm{l}}) &= \nabla_\theta \log \mu_\theta(X_0^{\texttt{l}})
+ \sum_{k=1}^T \nabla_\theta \log G_\theta(X_{k}^{\texttt{l}}, y_k) 
 - \frac{1}{2}\sum_{k=0}^{T\Delta_{\texttt{l}}^{-1}-1} \Bigg\{\nabla_\theta \|b_\theta(X_{k\Delta_{\texttt{l}}}^{\texttt{l}})\|^2\{
L_{(k+1)\Delta_{\texttt{l}}} - L_{k\Delta_{\texttt{l}}} 
\} \\ & 
-2 \nabla_\theta b_\theta(X_{k\Delta_{\texttt{l}}}^{\texttt{l}})^\top \Sigma(X_{k\Delta_{\texttt{l}}}^{\texttt{l}})^{-1}\sigma(X_{k\Delta_{\texttt{l}}}^{\texttt{l}})^\top \{X_{(k+1)\Delta_{\texttt{l}}}^{\texttt{l}}-X_{k\Delta_{\texttt{l}}}^{\texttt{l}}\}\Bigg\}.
\end{align*}
Set 
$$
h^{\texttt{l}}(\theta) = \mathbb{E}_{\pi_{\theta}^{\texttt{l}}}\left[H_{\theta}^{\texttt{l}}(U_{0:T}^{\textrm{l}})\right]
$$
where $\mathbb{E}_{\pi_{\theta}^{\texttt{l}}}$ is an expectation w.r.t.~the probability in \eqref{eq:post_smooth}.
Let $\theta^{\texttt{l}}_\star$ denote the root of $h^{\texttt{l}}(\theta)=0$ and $\theta_\star$ the root of $h(\theta)=0$.  We assume $\lim_{\texttt{l}\to\infty}\theta^{\texttt{l}}_\star=\theta_\star$, and, for simplicity, that both roots are unique---though it suffices to assume the existence of a consistent family of solutions.

The objective is to introduce an algorithm that can (potentially) recover $\theta_{\star}$ in expectation whilst only working with time discretizations and this will work by randomizing over $\texttt{l}$ and the number of iterations of a stochastic approximation (SA) method. 
Introduce a positive probability mass function $\mathbb{P}_{\texttt{L}}(\texttt{l})$ on $\mathbb{N}_0$ for sampling the approximation level.  Let $N_{\texttt{p}}=2^{\texttt{p}}$ be the number of iterations of an SA method and consider a positive probability mass function $\mathbb{P}_{\texttt{P}}(\texttt{p})$ on $\mathbb{N}_0$ for sampling the number of iterations.  We denote the Markov kernel described in Algorithm \ref{alg:cpf} as $K_{\theta,l}$ and the Markov kernel in Algorithm \ref{alg:cond_delta_pf} as $\check{K}_{\theta,\theta',l}$.  We use the notation
\begin{eqnarray*}
\nu_{\theta,\texttt{l}}(du_{0:T}^{\texttt{l}}) & = &  
\left\{\prod_{k=1}^{T}\overline{M}_{\theta,\texttt{l}}(u_{k-1}^{\texttt{l}},du_k^{\texttt{l}})\right\}\overline{\mu}_\theta(du_0^{\texttt{l}})\quad \texttt{l}\in\mathbb{N}_0 \\
\check{\nu}_{\theta,\theta',\texttt{l}}(d\mathbf{u}_{0:T}^{\texttt{l}}) & = &
 \left\{\prod_{k=1}^{T} \widetilde{M}_{\theta,\theta',\texttt{l}}(\mathbf{u}_{k-1}^{\texttt{l}},d\mathbf{u}_k^{\texttt{l}})\right\}\check{\mu}_{\theta,\theta'}(d\mathbf{u}_0^{\texttt{l}})\quad \texttt{l}\in\mathbb{N}.
\end{eqnarray*}
If $\theta=\theta'$ we write $\check{\nu}_{\theta,\texttt{l}}$ instead of $\check{\nu}_{\theta,\theta,\texttt{l}}$.
We set for any $\texttt{l}\in\mathbb{N}$
\begin{eqnarray*}
\mathcal{G}_{\theta,\theta',\text{f}}(\mathbf{u}_{0:T}^{\texttt{l}}) & = & \prod_{k=1}^T \frac{G_{\theta}(x_k^{\texttt{l}},y_k)}{\check{G}_{\theta,\theta'}(x_k^{\texttt{l}},x_k^{\texttt{l}-1},y_k)} \\
\mathcal{G}_{\theta,\theta',\text{c}}(\mathbf{u}_{0:T}^{\texttt{l}}) & = & \prod_{k=1}^T \frac{G_{\theta'}(x_k^{\texttt{l}-1},y_k)}{\check{G}_{\theta,\theta'}(x_k^{\texttt{l}},x_k^{\texttt{l}-1},y_k)}
\end{eqnarray*}
and if $\theta=\theta'$ we write $\mathcal{G}_{\theta,\text{f}}$ and $\mathcal{G}_{\theta,\text{c}}$.
Finally for any $\texttt{l}\in\mathbb{N}$
\begin{eqnarray*}
\mathcal{H}_{\theta,\theta',\text{f}}^{\texttt{l}}(\bar{\mathbf{u}}_{0:T}^{\texttt{l}},\mathbf{u}_{0:T}^{\texttt{l}}) & = &
\frac{\mathcal{G}_{\theta,\theta',\text{f}}(\bar{\mathbf{u}}_{0:T}^{\texttt{l}})H^{\texttt{l}}_{\theta}(\bar{u}_{0:T}^{\texttt{l}})
+
\mathcal{G}_{\theta,\theta',\text{f}}(\mathbf{u}_{0:T}^{\texttt{l}})H^{\texttt{l}}_{\theta}(u_{0:T}^{\texttt{l}})
}{\mathcal{G}_{\theta,\theta',\text{f}}(\bar{\mathbf{u}}_{0:T}^{\texttt{l}})+\mathcal{G}_{\theta,\theta',\text{f}}(\mathbf{u}_{0:T}^{\texttt{l}})}  \\
\mathcal{H}_{\theta,\theta',\text{c}}^{\texttt{l-1}}(\bar{\mathbf{u}}_{0:T}^{\texttt{l}},\mathbf{u}_{0:T}^{\texttt{l}}) & = &
\frac{\mathcal{G}_{\theta,\theta',\text{c}}(\bar{\mathbf{u}}_{0:T}^{\texttt{l}})H^{\texttt{l}-1}_{\theta'}(\bar{u}_{0:T}^{\texttt{l}-1})
+
\mathcal{G}_{\theta,\theta',\text{c}}(\mathbf{u}_{0:T}^{\texttt{l}})H^{\texttt{l}-1}_{\theta'}(u_{0:T}^{\texttt{l}-1})
}{\mathcal{G}_{\theta,\theta',\text{c}}(\bar{\mathbf{u}}_{0:T}^{\texttt{l}})+\mathcal{G}_{\theta,\theta',\text{c}}(\mathbf{u}_{0:T}^{\texttt{l}})}.
\end{eqnarray*}
We are now in a position to give our method which is in Algorithm \ref{alg:umsa}.  Note in this algorithm we use a step-size sequence $\gamma_n$,  $n\in\mathbb{N}_0$ of non-negative numbers,  such that $\sum_{n\in\mathbb{N}_0} \gamma_n=\infty$ and $\sum_{n\in\mathbb{N}_0} \gamma_n^2<+\infty$ as is standard in SA methods.

%

\begin{algorithm}[h]
\caption{Unbiased Markovian stochastic approximation (UMSA)}
\begin{algorithmic}[1]
\State Sample $\texttt{l} \sim \mathbb{P}_{\texttt{L}}$, $\texttt{p} \sim \mathbb{P}_{\texttt{P}}$

\If{$\texttt{l}=0$}
    \State Initialize $\theta_0^{\texttt{l}}\in\Theta$, sample $U_{0:T}^{0,\texttt{l}} \sim \nu_{\theta_0^{\texttt{l}},\texttt{l}}$.
    \For{$n\in\{1,\dots,N_{\texttt{p}}\}$}
        \State Sample $U_{0:T}^{n,\texttt{l}}|u_{0:T}^{n-1,\texttt{l}} \sim \overline M_{\theta_{n-1}^{\texttt{l}},\texttt{l}}(u_{0:T}^{n-1,\texttt{l}}, \cdot)$.
        \State $\theta_n^{l} = \theta_{n-1}^{l} + \gamma_n H_{\theta_{n-1}^{\texttt{l}}}^{\texttt{l}}(u_{0:T}^{n,\texttt{l}})$.
    \EndFor
    \If{$\texttt{p}=0$}
        \State Return $\widehat{\theta} = \dfrac{\theta_{N_{\texttt{p}}}^{\texttt{l}}}{\mathbb{P}_{\texttt{P}}(\texttt{p})\mathbb{P}_{\texttt{L}}(\texttt{l})}$.
    \Else
        \State Return $\widehat{\theta} = 
\dfrac{\theta^{\texttt{l}}_{N_{\texttt{p}}}-\theta^{\texttt{l}}_{N_{\texttt{p}-1}}}
{\mathbb{P}_{\texttt{P}}(\texttt{p})\mathbb{P}_{\texttt{L}}(\texttt{l})}$.
    \EndIf

\Else
    \State Initialize $\theta_0^{\texttt{l}}=\theta_0^{\texttt{l}-1}\in\Theta$, sample $\mathbf{U}_{0:T}^{0,\texttt{l}}\sim\check{\nu}_{\theta_0^{\texttt{l}},\texttt{l}}$.
    \For{$n\in\{1,\dots,N_{\texttt{p}}\}$}
        \State Sample $\bar{\mathbf{U}}_{0:T}^{n,\texttt{l}}|\mathbf{u}_{0:T}^{n-1,\texttt{l}} \sim 
\widetilde{M}_{\theta_{n-1}^{\texttt{l}},\theta_{n-1}^{\texttt{l-1}},\texttt{l}}(\mathbf{u}_{0:T}^{n-1,\texttt{l}}, \cdot)$,
$\mathbf{U}_{0:T}^{n,\texttt{l}}|\bar{\mathbf{u}}_{0:T}^{n,\texttt{l}} \sim 
\widetilde{M}_{\theta_{n-1}^{\texttt{l}},\theta_{n-1}^{\texttt{l-1}},\texttt{l}}(\bar{\mathbf{u}}_{0:T}^{n,\texttt{l}}, \cdot)$. \label{alg:doub_samp}
        \State $\theta_n^{l} = \theta_{n-1}^{l} + \gamma_n \mathcal{H}_{\theta_{n-1}^{\texttt{l}},\theta_{n-1}^{\texttt{l}-1},\text{f}}^{\texttt{l}}(\bar{\mathbf{u}}_{0:T}^{n,\texttt{l}},\mathbf{u}_{0:T}^{n,\texttt{l}})
$. \label{alg:doub_samp1}
        \State $\theta_n^{l-1} = \theta_{n-1}^{l-1} + \gamma_n \mathcal{H}_{\theta_{n-1}^{\texttt{l}},\theta_{n-1}^{\texttt{l}-1},\text{c}}^{\texttt{l-1}}(\bar{\mathbf{u}}_{0:T}^{n,\texttt{l}},\mathbf{u}_{0:T}^{n,\texttt{l}})$.\label{alg:doub_samp2}
    \EndFor
    \If{$\texttt{p}=0$}
        \State Return $\widehat{\theta} =
        \dfrac{\theta_{N_{\texttt{p}}}^{\texttt{l}} - \theta_{N_{\texttt{p}}}^{\texttt{l}-1}}{\mathbb{P}_{\texttt{P}}(\texttt{p})\mathbb{P}_{\texttt{L}}(\texttt{l})}.$
    \Else
        \State Return
        \[
        \widehat{\theta} =
        \frac{
        \big(\theta_{N_{\texttt{p}}}^{\texttt{l}} - \theta_{N_{\texttt{p}}}^{\texttt{l}-1}\big)
        -
    \big(\theta_{N_{\texttt{p}-1}}^{\texttt{l}} - \theta_{N_{\texttt{p}-1}}^{\texttt{l}-1}\big)
        }{\mathbb{P}_{\texttt{P}}(\texttt{p})\mathbb{P}_{\texttt{L}}(\texttt{l})}.
        \]
    \EndIf
\EndIf
\end{algorithmic}
\label{alg:umsa}
\end{algorithm}

Algorithm \ref{alg:umsa} is similar to \cite{awadelkarim2024unbiased} but there is a subtle and critical difference.
In the work of \cite{awadelkarim2024unbiased} the authors used what is called the coupled conditional particle filter (CCPF) (a Markov kernel) in place of the kernel $\widetilde{M}_{\theta,\theta',l}$ that we use and that former kernel has as its invariant measure the correct marginals of the type  \eqref{eq:post_smooth} which in fact would be preferable in our case.  The price to pay in \cite{awadelkarim2024unbiased} is that the CCPF often performs worse than a delta particle filter due to the resampling operation and worse in-terms of the closeness (in some sense) of sample paths of the discretized diffuion as a function of $l$; this latter closeness is critical to obtaining finite variance estimators.  In our context using a CCPF is potentially disasterous in terms of closeness of the paths of the diffusion due to the presence of the inverse subordinator.  In effect what would happen is that paths of the inverse subordinator at different levels would get mixed up,  in terms of the particle counter and this is what would lose the closeness in the SDE paths that we would need; this has been confirmed in several empirical studies.  The conditional delta particle filter $\widetilde{M}_{\theta,\theta',l}$ does not have this problem,  but will provide approximate samples from the target probability \eqref{eq:post_smooth_coup} which does not have as its marginals probabilities such as \eqref{eq:post_smooth} and as such this is why in line \ref{alg:doub_samp} of Algorithm \ref{alg:umsa} one must produce two samples and use the weighted average type `gradients' $\mathcal{H}_{\theta,\theta',\text{f}}^{\texttt{l}}$ and $\mathcal{H}_{\theta,\theta',\text{c}}^{\texttt{l}}$ in lines \ref{alg:doub_samp1} and \ref{alg:doub_samp2}
of Algorithm \ref{alg:umsa}.  Note that producing only one sample $\mathbf{u}_{0:T}^{\texttt{l}}$ would lead to the \emph{wrong} estimators being produced which is why two samples are used.  We remark that this type of weighted SA method is different from the norm,  but at least in numerical simulations this does not lead to any noticable issue; a thorough theoretical study is warranted however; we expect that one can apply the theory of \cite{awadelkarim2024unbiased} appropriately adapted to this case.

The methodology presented here does not allow $\sigma$ to depend on $\theta$ due to the application of the Girsanov formula.  It is well-known that the latter can be a little too restrictive for score-computation and there exist several alternatives,  including using diffusion bridges such as in \cite{aj2025,beskos,non_synch};  some additional discussion can be found in \cite{heng2024unbiased}.

%
%

%
\subsection{Bayesian Estimation}

To give our approach we set for $\texttt{l}\in\mathbb{N}_0$
$$
\pi^{\texttt{l}}(d(\theta,u_{0:T}^{\texttt{l}})) \propto 
\left(\left\{\prod_{k=1}^{T}
\overline{M}_{\theta,\texttt{l}}(u_{k-1}^{\texttt{l}},du_k^{\texttt{l}})
 G_\theta(x_{k}, y_k)\right\}\overline{\mu}_\theta(du_0)
\right)
\overline{\pi}(\theta)d\theta
$$
and for $\texttt{l}\in\mathbb{N}$
$$
\check{\pi}^{\texttt{l}}(d(\theta,\mathbf{u}_{0:T}^{\texttt{l}})) \propto 
\left(
\left\{\prod_{k=1}^{T} \widetilde{M}_{\theta,\texttt{l}}(\mathbf{u}_{k-1}^{\texttt{l}},d\mathbf{u}_k^{\texttt{l}})
 \check{G}_{\theta}(x_{k},^{\texttt{l}} x_k^{\texttt{l}-1},y_k)\right\}\check{\mu}_{\theta}(d\mathbf{u}_0^{\texttt{l}})
\right)\overline{\pi}(\theta)d\theta.
$$
We denote by $K_{\texttt{l}}$ (resp.~$\check{K}_{\texttt{l}}$) any Markov kernel that can produce approximate samples from $\pi^{\texttt{l}}$ (resp.~$\check{\pi}^{\texttt{l}}$).
One example is the particle marginal Metropolis-Hastings method in \cite{andrieu2010particle} which could be based on (the unconditional version) Algorithm \ref{alg:cpf} (resp.~Algorithm \ref{alg:cond_delta_pf}).  
The unconditional (standard) versions of Algorithms \ref{alg:cpf} and \ref{alg:cond_delta_pf} do not need to input a reference trajectory (e.g.~line \ref{alg:cpf_input} of Algorithm \ref{alg:cpf}) and loop over $N$ particles not $N-1$ (e.g.~line \ref{alg:cpf_for} of Algorithm \ref{alg:cpf}) and of course do not update the $N^{\text{th}}$ particle with the reference trajectory (e.g.~line \ref{alg:cpf_update} of Algorithm \ref{alg:cpf}).  Markov kernels of the type of
$K_{\textrm{l}}$ and $\check{K}_{\texttt{l}}$ in the manner just described have been well-documented in the literature such as in \cite[Page A892,  Par 2]{jasra} and so a full description is omitted.
The Bayesian method is essentially that which has appeared in \cite{jasra} (multilevel MCMC) with an appropriate modification to this context and is presented in Algorithm \ref{alg:bayes}.

\begin{algorithm}[h]
\caption{Bayesian Multilevel Parameter Estimation.}
\begin{algorithmic}[1]
\For{$\texttt{l}\in\{0,\dots,\texttt{L}\}$}

\If{$\texttt{l}=0$}\label{alg:bpe1}
\State Set $\theta_0^{\texttt{l}}\in\Theta$ and sample $U_{0:T}^{0,\texttt{l}}\sim\nu_{\theta_0^{\texttt{l}},\texttt{l}}$.
\For{$n\in\{1,\dots,N_0\}$}
\State Sample 
$$
(\theta_n^{\texttt{l}},U_{0:T}^{n,\texttt{l}})|(\theta_{n-1}^{\texttt{l}},u_{0:T}^{n-1,\texttt{l}})\sim
K_{\textrm{l}}\left((\theta_{n-1}^{\texttt{l}},u_{0:T}^{n-1,\texttt{l}}),\cdot\right).
$$
\EndFor
\Else\label{alg:bpe2}
\State Set $\theta_0^{\texttt{l}}\in\Theta$ and sample $\mathbf{U}_{0:T}^{0,\texttt{l}}\sim\check{\nu}_{\theta_0^{\texttt{l}},\texttt{l}}$.
\For{$n\in\{1,\dots,N_{\texttt{l}}\}$}
\State Sample 
$$
(\theta_n^{\texttt{l}},\mathbf{U}_{0:T}^{n,\texttt{l}})|(\theta_{n-1}^{\texttt{l}},\mathbf{u}_{0:T}^{n-1,\texttt{l}})\sim
\check{K}_{\textrm{l}}\left((\theta_{n-1}^{\texttt{l}},\mathbf{u}_{0:T}^{n-1,\texttt{l}}),\cdot\right).
$$
\EndFor
    \EndIf
\EndFor
\end{algorithmic}
\label{alg:bayes}
\end{algorithm}

In order to estimate terms of the type
$$
\mathbb{E}_{\pi^{\texttt{L}}}[\varphi(\theta)]
$$
with $\varphi:\Theta\rightarrow\mathbb{R}$ being integrable w.r.t.~each of the $\pi^{0},\dots,\pi^{\texttt{L}}$
one can use the following estimator based on the output of Algorithm \ref{alg:bayes}:
\begin{equation}\label{eq:ml_bayes}
\widehat{\mathbb{E}_{\pi^{\texttt{L}}}[\varphi(\theta)]} = \frac{1}{N_0+1}\sum_{n=0}^{N_0}\varphi(\theta_n^0)
+\sum_{\texttt{l}=1}^{\texttt{L}}\Bigg\{
\frac{\sum_{n=0}^{N_{\texttt{l}}}
\varphi(\theta_n^{\texttt{l}})\mathcal{G}_{\theta_n^{\texttt{l}},\text{f}}(\mathbf{u}^{n,\texttt{l}}_{0:T})
}{\sum_{n=0}^{N_{\texttt{l}}}\mathcal{G}_{\theta_n^{\texttt{l}},\text{f}}(\mathbf{u}^{n,\texttt{l}}_{0:T})} - 
\frac{\sum_{n=0}^{N_{\texttt{l}}}
\varphi(\theta_n^{\texttt{l}})\mathcal{G}_{\theta_n^{\texttt{l}},\text{c}}(\mathbf{u}^{n,\texttt{l}}_{0:T})
}{\sum_{n=0}^{N_{\texttt{l}}}\mathcal{G}_{\theta_n^{\texttt{l}},\text{c}}(\mathbf{u}^{n,\texttt{l}}_{0:T})}
\Bigg\}.
\end{equation}
Note that,  in a similar manner one can also consider estimation of the states $X_0,\dots,X_T$.
The justification of \eqref{eq:ml_bayes} is as follows.  The first term on the right hand side follows as we are running an MCMC chain to target $\pi^0$ (see line \ref{alg:bpe1} of Algorithm \ref{alg:bayes}) and the summation over $\texttt{l}$ uses the collection of independent MCMC chains (see line \ref{alg:bpe2} of Algorithm \ref{alg:bayes}) that
have been generated to approximate $\check{\pi}^{\texttt{l}}$. The ratio of averages is then basically an approximation of $\mathbb{E}_{\pi^{\texttt{l}}}[\varphi(\theta)]-\mathbb{E}_{\pi^{\texttt{l}-1}}[\varphi(\theta)]$
through the same type of change of measure formula that is in \cite[eq.~(9)]{jasra}.  In the next section we explain how to choose $\texttt{L}$ and the number of iterations $N_0,\dots,N_{\texttt{L}}$.

\section{Theoretical Result}\label{sec:theory}

In order to prove our main result,  we make an additional assumption (A\ref{ass:add}) which is given in Appendix \ref{app:proof} and is discussed there.  The assumption is more-or-less \cite[(A1-4)]{jasra}.
Recall $\pi(\varphi)$ in \eqref{eq:bpe_def} and we shall write the collection of bounded,  measurable and globally Lipschitz functions $\varphi:\Theta\rightarrow\mathbb{R}$ as $\mathsf{B}_{\text{b,Lip}}(\Theta)$.
By globally Lipschitz we mean that $|\varphi(\theta)-\varphi(\theta')|\leq C|\theta-\theta'|$ for every $(\theta,\theta')\in\Theta^2$ and recall that $|\cdot|$ is the Euclidean norm in the corresponding space. 
The proof of the following result is in Appendix \ref{app:proof}.

\begin{theorem}
\label{thm:2}
Assume (A\ref{assump:coefficients}-\ref{ass:add}).   Then for any 
$(\varphi,T)\in\mathsf{B}_{\text{\emph{b,Lip}}}(\Theta)\times\mathbb{N}$ there exists a $C<+\infty$
such that for any $(\texttt{L},N_0,\dots,N_{\texttt{L}})\in\mathbb{N}^{\texttt{L}+2}$ we have
$$
 \mathbb{E}\left[\left(
\widehat{\mathbb{E}_{\pi^{\texttt{L}}}[\varphi(\theta)]}-\pi(\varphi)
\right)^2\right]  \leq C\left(\frac{1}{N_0+1} +\sum_{\texttt{l}=1}^{\texttt{L}}\frac{ \Delta_{\texttt{l}} }{N_{\texttt{l}}+1}
+ \sum_{\texttt{l}=2}^{\texttt{L}}\sum_{\texttt{q}=1}^{\texttt{l}-1}
\frac{ \Delta_{\texttt{l}}^{1/2}}{N_{\texttt{l}}+1} \frac{ \Delta_{\texttt{q}}^{1/2}}{N_{\texttt{q}}+1} +
\Delta_{\texttt{L}}^2
\right).
$$
\end{theorem}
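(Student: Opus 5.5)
We split the error into a bias part and a variance/covariance part. Write $\widehat{\mathbb{E}_{\pi^{\texttt{L}}}[\varphi(\theta)]}-\pi(\varphi) = \big(\widehat{\mathbb{E}_{\pi^{\texttt{L}}}[\varphi(\theta)]}-\mathbb{E}_{\pi^{\texttt{L}}}[\varphi(\theta)]\big) + \big(\mathbb{E}_{\pi^{\texttt{L}}}[\varphi(\theta)]-\pi(\varphi)\big)$ and use $(a+b)^2\le 2a^2+2b^2$.

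\emph{Bias term.} This is deterministic. Because $\varphi$ is bounded and Lipschitz, we would bound it by the weak error of the Euler scheme \eqref{eq:euler} for the time-changed SDE. We would assume, as in \cite{jasra}, that (A\ref{ass:add}) gives $|\mathbb{E}_{\pi^{\texttt{L}}}[\varphi(\theta)]-\pi(\varphi)|\le C\Delta_{\texttt{L}}$, which squares to the $\Delta_{\texttt{L}}^2$ term. Concretely: write both posteriors as ratios of $\int\varphi(\theta)\mathbb{E}_\theta[\psi]\overline{\pi}(\theta)d\theta$-type quantities. Conditional on the exactly simulated inverse subordinator path, the weak error of Euler for a Lipschitz SDE driven by $W_{L_t}$ is $\mathcal{O}(\Delta_{\texttt{l}})$ uniformly in $\theta$ (with moments of $L_T$ finite). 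Then bound the difference of ratios using lower bounds on the normalising constants.

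\emph{Fluctuation term.} Telescope $\mathbb{E}_{\pi^{\texttt{L}}}[\varphi]=\mathbb{E}_{\pi^0}[\varphi]+\sum_{\texttt{l}=1}^{\texttt{L}}(\mathbb{E}_{\pi^{\texttt{l}}}[\varphi]-\mathbb{E}_{\pi^{\texttt{l}-1}}[\varphi])$. Use the change of measure identity: $\mathbb{E}_{\pi^{\texttt{l}}}[\varphi]=\check{\pi}^{\texttt{l}}(\varphi\mathcal{G}_{\text{f}})/\check{\pi}^{\texttt{l}}(\mathcal{G}_{\text{f}})$, and similarly for the coarse level with $\mathcal{G}_{\text{c}}$. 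This works because the $\theta$-marginal of the coupled kernel gives the right level-$\texttt{l}$ and level-$(\texttt{l}-1)$ laws of the $X$-skeletons. The error is then a sum of $\texttt{L}+1$ terms $e_0,e_1,\dots,e_{\texttt{L}}$, one per chain, and the chains are independent. Expanding the square gives $\sum_{\texttt{l}}\mathbb{E}[e_{\texttt{l}}^2]+\sum_{\texttt{l}\neq \texttt{q}}\mathbb{E}[e_{\texttt{l}}]\mathbb{E}[e_{\texttt{q}}]$. For $e_0$, standard MCMC bounds under uniform ergodicity of $K_0$ from (A\ref{ass:add}) give $\mathbb{E}[e_0^2]\le C/(N_0+1)$ and $|\mathbb{E}[e_0]|\le C/(N_0+1)$. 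For $\texttt{l}\ge1$, write the difference of ratio estimators as
$$\frac{\widehat{A}_{\text{f}}}{\widehat{B}_{\text{f}}}-\frac{\widehat{A}_{\text{c}}}{\widehat{B}_{\text{c}}},$$
where each hat denotes the ergodic average over $n\in\{0,\dots,N_{\texttt{l}}\}$. Linearise each ratio around its limit, exactly as in \cite[Lemmas/Theorem]{jasra}. The leading term is an ergodic average of the centred function
$$F_{\texttt{l}}(\theta,\mathbf{u})=\frac{\mathcal{G}_{\text{f}}(\varphi-\mathbb{E}_{\pi^{\texttt{l}}}\varphi)}{\check{\pi}^{\texttt{l}}(\mathcal{G}_{\text{f}})}-\frac{\mathcal{G}_{\text{c}}(\varphi-\mathbb{E}_{\pi^{\texttt{l}-1}}\varphi)}{\check{\pi}^{\texttt{l}}(\mathcal{G}_{\text{c}})}.$$
The remainder terms are products of two centred averages.

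\emph{Main obstacle.} The key step is showing that $\check{\pi}^{\texttt{l}}(|F_{\texttt{l}}|^2)\le C\Delta_{\texttt{l}}$ and that the Poisson-equation solution of $\check{K}_{\texttt{l}}$ for $F_{\texttt{l}}$ inherits this size. We first need a strong error estimate for the synchronous coupling: $\mathbb{E}[|X_k^{\texttt{l}}-X_k^{\texttt{l}-1}|^2]\le C\Delta_{\texttt{l}}$. We would prove this by conditioning on the shared subordinator path and applying Euler strong-order-$1/2$ arguments in the clock $L_t$, using (A\ref{assump:coefficients}) and moments of $L_T$. We then transfer it to the weights via Lipschitz $G$ bounded below: $|\mathcal{G}_{\text{f}}-\mathcal{G}_{\text{c}}|\le C\sum_k|X_k^{\texttt{l}}-X_k^{\texttt{l}-1}|$. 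A uniform (in $\texttt{l}$) minorisation of $\check{K}_{\texttt{l}}$ from (A\ref{ass:add}) then gives Poisson solutions bounded by the $L^2$ size of $F_{\texttt{l}}$. This yields $\mathbb{E}[e_{\texttt{l}}^2]\le C\Delta_{\texttt{l}}/(N_{\texttt{l}}+1)$, with the quadratic remainders of order $\Delta_{\texttt{l}}/(N_{\texttt{l}}+1)^2$ or better. The same ergodic arguments give the bias of each chain: $|\mathbb{E}[e_{\texttt{l}}]|\le C\Delta_{\texttt{l}}^{1/2}/(N_{\texttt{l}}+1)$. This comes from the initialisation bias of the averages and the second-order ratio terms, each controlled by $\|F_{\texttt{l}}\|\lesssim\Delta_{\texttt{l}}^{1/2}$. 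The cross products $\mathbb{E}[e_{\texttt{l}}]\mathbb{E}[e_{\texttt{q}}]$ for $\texttt{q}<\texttt{l}$ then produce the double sum. The products with $e_0$ contribute $\Delta_{\texttt{l}}^{1/2}/((N_{\texttt{l}}+1)(N_0+1))$, which is dominated by $1/(N_0+1)$ through the bound $\sum_{\texttt{l}}\Delta_{\texttt{l}}^{1/2}\le C$. Collecting terms gives the stated bound.
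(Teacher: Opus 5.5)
Your proposal follows the paper's proof in its overall structure. The bias term comes from (A\ref{ass:add} 4.), and the level-$0$ chain is handled by uniform ergodicity. Each level difference is rewritten via $\mathbb{E}_{\pi^{\texttt{l}}}[\varphi]=\check{\pi}^{\texttt{l}}(\varphi\mathcal{G}_{\theta,\text{f}})/\check{\pi}^{\texttt{l}}(\mathcal{G}_{\theta,\text{f}})$ and its coarse analogue. Independence of the chains reduces the square to second moments plus products of means, and you obtain the same per-level bounds $C\Delta_{\texttt{l}}/(N_{\texttt{l}}+1)$ and $C\Delta_{\texttt{l}}^{1/2}/(N_{\texttt{l}}+1)$. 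The differences are minor. The paper separates $\mathtt{T}_1,\mathtt{T}_2,\mathtt{T}_3$ with the $C_2$-inequality instead of absorbing the level-$0$ cross terms. It also replaces your linearisation with the exact identity \eqref{eq:abcd}, in which every term is a bounded prefactor times something of size $\Delta_{\texttt{l}}^{1/2}/\sqrt{N_{\texttt{l}}+1}$ in $L^2$. That handles your difference of quadratic remainders with second moments only; your ``controlled by $\|F_{\texttt{l}}\|$'' remark needs exactly that kind of splitting.

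The step you should not carry out as described is your self-contained proof of $\mathbb{E}|X_k^{\texttt{l}}-X_k^{\texttt{l}-1}|^2\le C\Delta_{\texttt{l}}$, and the same applies to your ``concretely'' weak-error sketch for the bias. Conditioning on the subordinator and running Euler strong-order-$1/2$ arguments in the operational clock gives a bound governed by the operational step sizes, roughly $\mathbb{E}[C(L_T)\sum_k(L_{(k+1)\Delta_{\texttt{l}}}-L_{k\Delta_{\texttt{l}}})^2]$. These steps are not $\mathcal{O}(\Delta_{\texttt{l}})$ in general. For the inverse $\alpha$-stable subordinator used in the numerics, self-similarity together with the overshoot (generalised arcsine) law gives $\mathbb{E}\sum_k(L_{(k+1)\Delta_{\texttt{l}}}-L_{k\Delta_{\texttt{l}}})^2\asymp\Delta_{\texttt{l}}^{\alpha}$. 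Separately, weak order one for Euler is not guaranteed under the Lipschitz conditions of (A\ref{assump:coefficients}) alone.

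The paper derives neither rate. The bias bound is simply assumed as (A\ref{ass:add} 4.). The $\Delta_{\texttt{l}}$ strong rate is the imported estimate \eqref{eq:strong_weak}, used in Lemma \ref{lem:lem1} in two ways:
\begin{itemize}
\item its $x_0$-dependent constant is integrated against $\mu_\theta$ and $\overline{\pi}$ via (A\ref{ass:add} 2.), a hypothesis you never invoke;
\item it is transferred from the coupled prior law to $\check{\pi}^{\texttt{l}}$ using the two-sided bounds on $G_\theta$ in (A\ref{ass:add} 1.).
\end{itemize}
Cite these premises in place of your derivation. With that substitution your argument goes through.
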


The main implication of this result is standard in the multilevel Monte Carlo literature.  If one chooses $\epsilon>0$ arbitrarily then to achieve a MSE of $\mathcal{O}(\epsilon^2)$ one can choose $\texttt{L}$ so that $\Delta_{\texttt{L}}=\mathcal{O}(\epsilon)$ and $N_{\texttt{l}}=\mathcal{O}(\epsilon^{-2}\Delta_{\texttt{l}}\texttt{L})$.  The cost to achieve this MSE is
$$
\mathcal{O}\left(\sum_{\texttt{l}=0}^{\texttt{L}}N_{\texttt{l}}\Delta_{\texttt{l}}^{-1}\right) = \mathcal{O}(\epsilon^{-2}\log(\epsilon)^2).
$$
If one had chosen to use only a single level $\texttt{L}$ and the Markov kernel $K_{\texttt{L}}$ the cost to achieve the same MSE is  $\mathcal{O}(\epsilon^{-3})$.

\section{Numerical Results}\label{sec:numerics}

\subsection{Overview}

We evaluate the proposed methodology under a unified sub-diffusive Black--Scholes framework along two complementary data settings. First, we conduct synthetic data experiments generated from the model with known parameter values. This enables us to verify the unbiased estimation provided by the score-based approach and to demonstrate the improvement in computational efficiency achieved by Bayesian-based multilevel MCMC methods.
Second, we consider a real data application based on NVIDIA stock returns, where the model is used as a statistical approximation to observed financial dynamics. The focus is on assessing the ability of the model to reproduce key stylized facts of financial time series, including distributional shape, tail behavior, temporal dependence, and volatility dynamics. 

In our two scenarios, we implement two inference procedures derived from the same methodological framework. The score-based approach is used as the main estimation tool throughout the experiments, while a Bayesian implementation is also included as an alternative numerical realization of the same underlying model-based inference scheme. 
Overall, this experimental design evaluates the proposed framework in both controlled and empirical environments, highlighting its statistical and computational properties as well as its practical applicability in financial modeling.

\subsubsection{Model Specification}

We consider a sub-diffusive Black--Scholes model of time-changed diffusion type, defined by
\begin{equation*}
dX_t = X_t \bigl(\mu\, dL_t + \sigma\, dB_{L_t}\bigr), \qquad X_0 = x > 0,
\end{equation*}
where $L_t$ is the \emph{inverse $\alpha$-stable subordinator} and $B_{L_t}$ denotes a standard Brownian motion evaluated at the random operational time $L_t$.
To make the construction precise, let $D_\alpha=\{D_\alpha(\tau),\tau\ge 0\}$ be an $\alpha$-stable subordinator with index $\alpha\in(0,1)$, i.e. a non-decreasing L\'evy process with Laplace transform
\begin{equation*}
\mathbb{E}\bigl[e^{-\eta D_\alpha(\tau)}\bigr] = e^{-\tau\eta^{\alpha}}, \qquad \eta>0.
\end{equation*}
The inverse $\alpha$-stable subordinator $L_t=\{L_t,t\ge 0\}$ is defined as the first-passage time process of $D_\alpha$:
\begin{equation*}
L_t \;=\; \inf\{\tau>0: D_\alpha(\tau)>t\}, \qquad t\ge 0.
\end{equation*}
The process $L_t$ is non-decreasing and right-continuous, with sample paths exhibiting random periods of constant value that introduce \emph{trapping events} into the dynamics.
As $L_t$ has stationary but dependent increments, $B_{L_t}$ is no longer a L\'evy process; instead, it displays subdiffusive behavior with long-range dependence and fat-tailed marginal distributions.

The observed process is given by
\[
Y_t = X_t + \epsilon_t, \quad \epsilon_t \stackrel{\text{i.i.d.}}{\sim} \mathcal{N}(0,\nu^2),
\]
where $\epsilon_t$ represents independent observational noise which are Gaussian of mean 0 and variance $\nu^2$ (denoted $\mathcal{N}(0,\nu^2)$).
For more background information on this model, see \cite{magdziarz2009black,zhang2025sub}. This stochastic framework is used in both the synthetic data experiments and the empirical study, allowing for a unified evaluation of the proposed estimation procedure.  In this study, $\alpha$ and $\sigma$ are treated as fixed constants 
($\alpha_0$, $\sigma_0$), with the latter being estimable in principle 
but excluded here for parsimony. Estimation focuses on the reduced 
parameter-vector
\(\theta = (\mu, \nu^2).\)

\subsection{Synthetic Data Experiments}

We now introduce a set of synthetic data experiments to evaluate estimation procedure by setting
$\alpha = 0.75$, $\mu = 0.10$, $\sigma = 1$ and $\nu^2=0.10$ and simulating 100 data points with discretization level $\texttt{L}=8$.

\subsubsection{Score-Based Estimation}

We will estimate $\theta$ using $M\in\{2,4,\dots,32\}$ samples,  that is running Algorithm \ref{alg:umsa} $M$ times in parallel and averaging the results.  This is repeated 100 times to compute an approximation of the MSE.
We use $N=100$ samples in the conditional particle filter and conditional delta particle filter.
We truncate the support of $\texttt{l}$ to $\{3,\dots,8\}$ and $\mathbb{P}_{\texttt{L}}(\texttt{l})\propto 2^{-1.5\texttt{l}}$
for $\texttt{l}\in\{3,\dots,8\}$. $N_{\texttt{p}}=5\times2^{\texttt{p}}$ and we allow $\mathbb{P}_{\texttt{p}}$ to depend upon $\texttt{l}$ and in particular
\[
\mathbb{P}_{\texttt{P}}(\texttt{p}|\texttt{l}) \propto g(\texttt{p}|\texttt{l}),
\]
with
\[
g(\texttt{p}|\texttt{l}) =
\begin{cases}
2^{5-\texttt{p}}, & \texttt{p} \in \{1, \dots, 5 \wedge (8 - \texttt{l})\}, \\
2^{-\texttt{p}} \texttt{p} [\log_2(\texttt{p})]^2, & 5 < \texttt{p} \leq 7 \quad \text{and} \quad \texttt{l}=3.\\
0,& \text{otherwise}.
\end{cases}
\]
%
%
%
%

In Figure \ref{fig:unbiased}, we report the relationship between the MSE, CPU runtime, and the number of parallel runs $M$. The runtime is computed as the total CPU time. The results indicate that the MSE scales approximately as 
$\mathcal{O}(M^{-1})$, which is consistent with the theoretical property the estimator is unbiased with variance
$\mathcal{O}(M^{-1})$. 

\begin{figure}[h]
\centering
\includegraphics[width=0.48\textwidth]{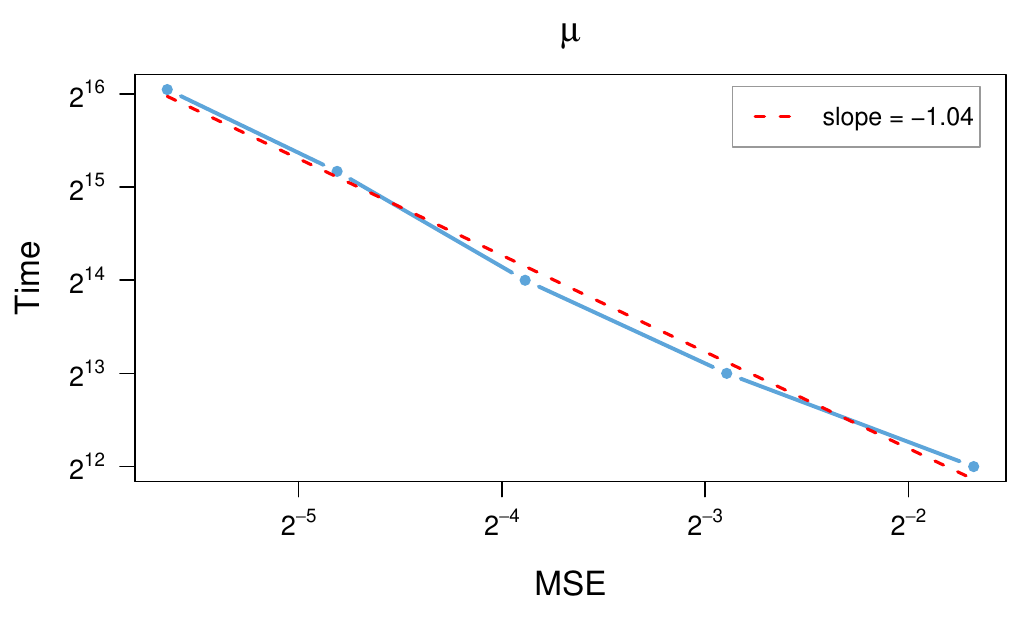}
\includegraphics[width=0.48\textwidth]{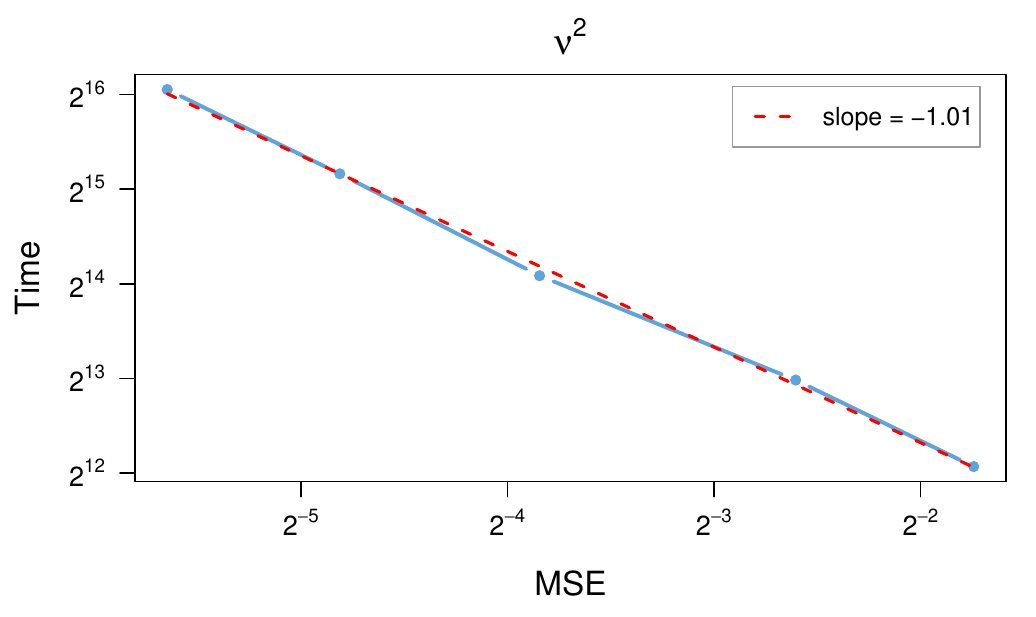}\\[6pt]
\includegraphics[width=0.48\textwidth]{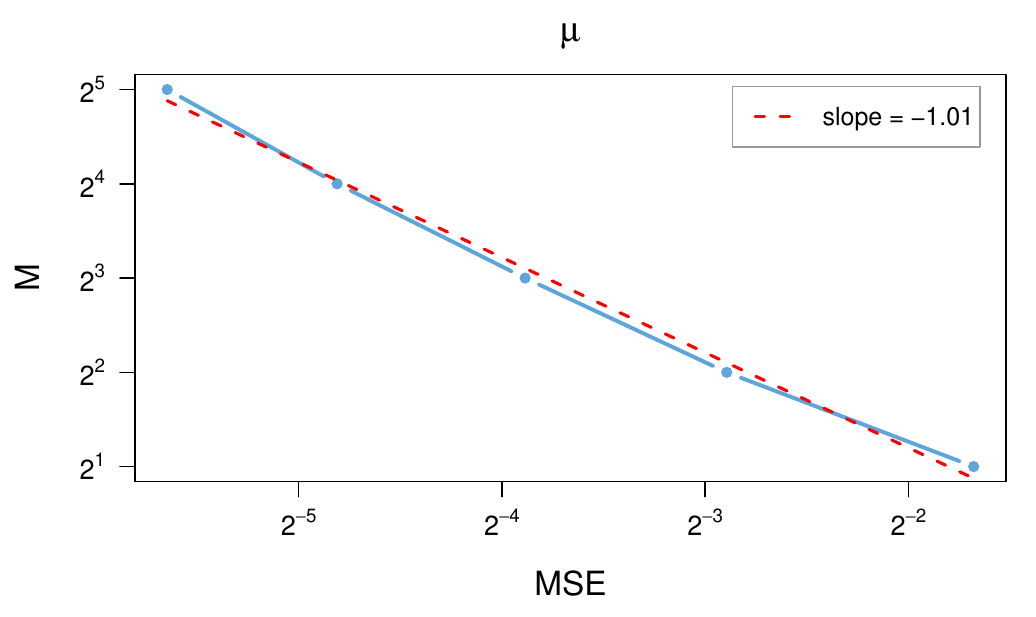}
\includegraphics[width=0.48\textwidth]{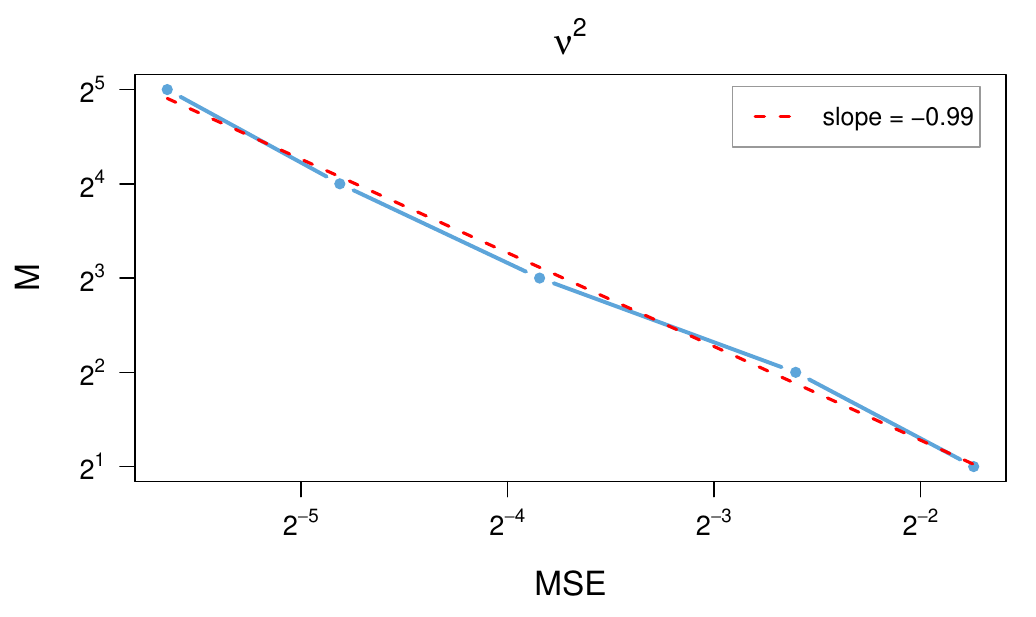}
\caption{Top row: MSE versus CPU runtime for estimating $\mu$ (left) and $\nu^2$ (right). Bottom row: MSE versus $M$ for $\mu$ (left) and $\nu^2$ (right). Dashed lines indicate fitted slopes in log-log scale.}
\label{fig:unbiased}
\end{figure}

\subsubsection{Bayesian Estimation}
We will use the particle marginal Metropolis-Hastings method  to be the type of \(K_1\) and \(\check{K}_1\) in Algorithm \ref{alg:bayes}. More detailed setting includes independent Gaussian priors for \(\mu,\nu^2\) and Gaussian random walk proposals with variance $0.1$, with a burn-in proportion of 0.2  (i.e., the first 20\% of iterations are discarded). The result is in Figure \ref{fig:ml_sl_complexity}. Based on our methodology, the multilevel method significantly reduces computational cost. The regression coefficients between log cost and log MSE empirically validate our theoretical results.

\begin{figure}[H]
\centering
\includegraphics[width=0.48\textwidth]{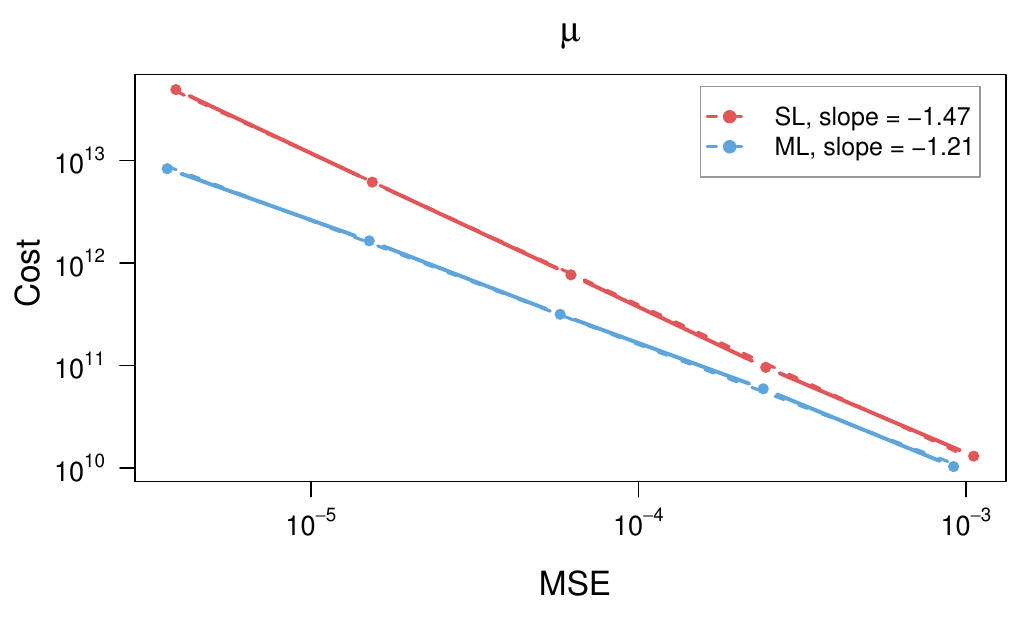}
\includegraphics[width=0.48\textwidth]{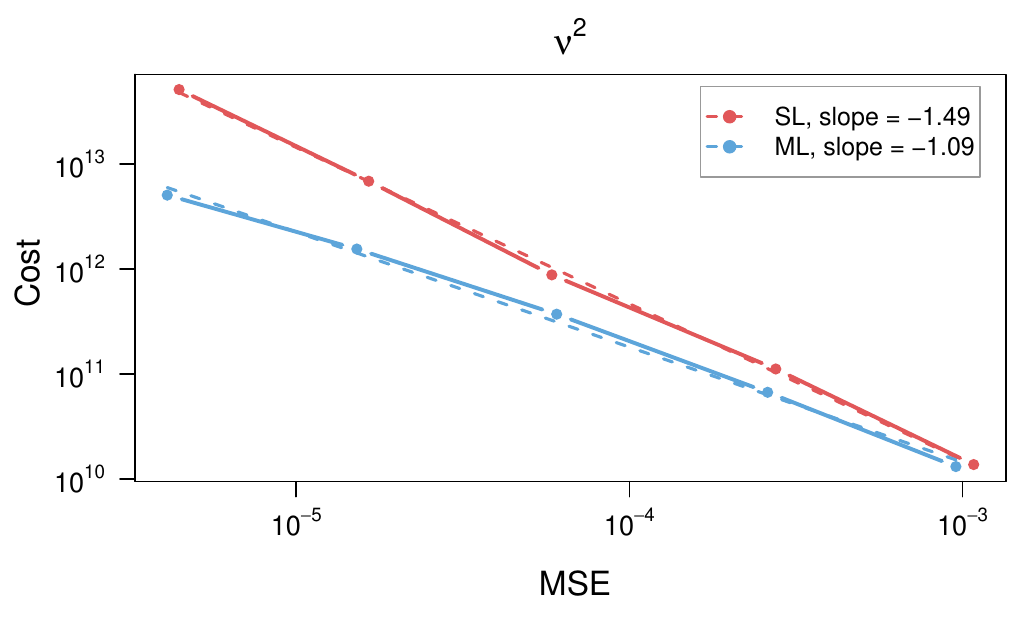}
\caption{Complexity comparison of single-level (SL) and multilevel (ML) PMMH: MSE versus computational cost for $\mu$ (left) and $\nu^2$ (right).}
\label{fig:ml_sl_complexity}
\end{figure}


\subsection{Real Data}

In this section, we evaluate the proposed sub-diffusion-driven Black-Scholes model using NVIDIA stock data. The empirical dataset consists of daily observations of NVIDIA stock prices from November 1, 2021 to October 31, 2022. After excluding non-trading days such as weekends and public holidays, the sample contains approximately 252 trading observations (see Figure \ref{fig:price}). This period is relevant as it covers a pronounced bearish market phase for NVIDIA, during which the stock experienced a substantial decline following the post-pandemic technology sector correction and broader macroeconomic tightening. Data was downloaded from Kaggle: \url{https://www.kaggle.com/datasets/alitaqishah/nvidia-stock-data-19992026-the-ai-mega-stock}.

Our empirical analysis proceeds in two stages. First, we estimate the model parameters using two methods: the score-based estimation and Bayesian estimation. Second, using the estimated parameters from both methods, we simulate trajectories from the sub-diffusion-driven Black-Scholes model and compare the resulting synthetic returns with the empirical NVIDIA returns. We examine both distributional characteristics and temporal dependence structures, with particular attention to whether the model can reproduce key stylized facts of financial returns such as heavy tails, volatility clustering, and persistent autocorrelation under adverse market conditions characterized by elevated volatility and structural stress.
We carefully chose
$\alpha = 0.75$ and $\sigma = 0.10$. We set $\texttt{L}=6$ in the Bayesian method. The estimated parameters are in Table \ref{tab:parameter_estimates}

\begin{figure}[h]
    \centering
\includegraphics[width=0.8\textwidth,height=10cm]{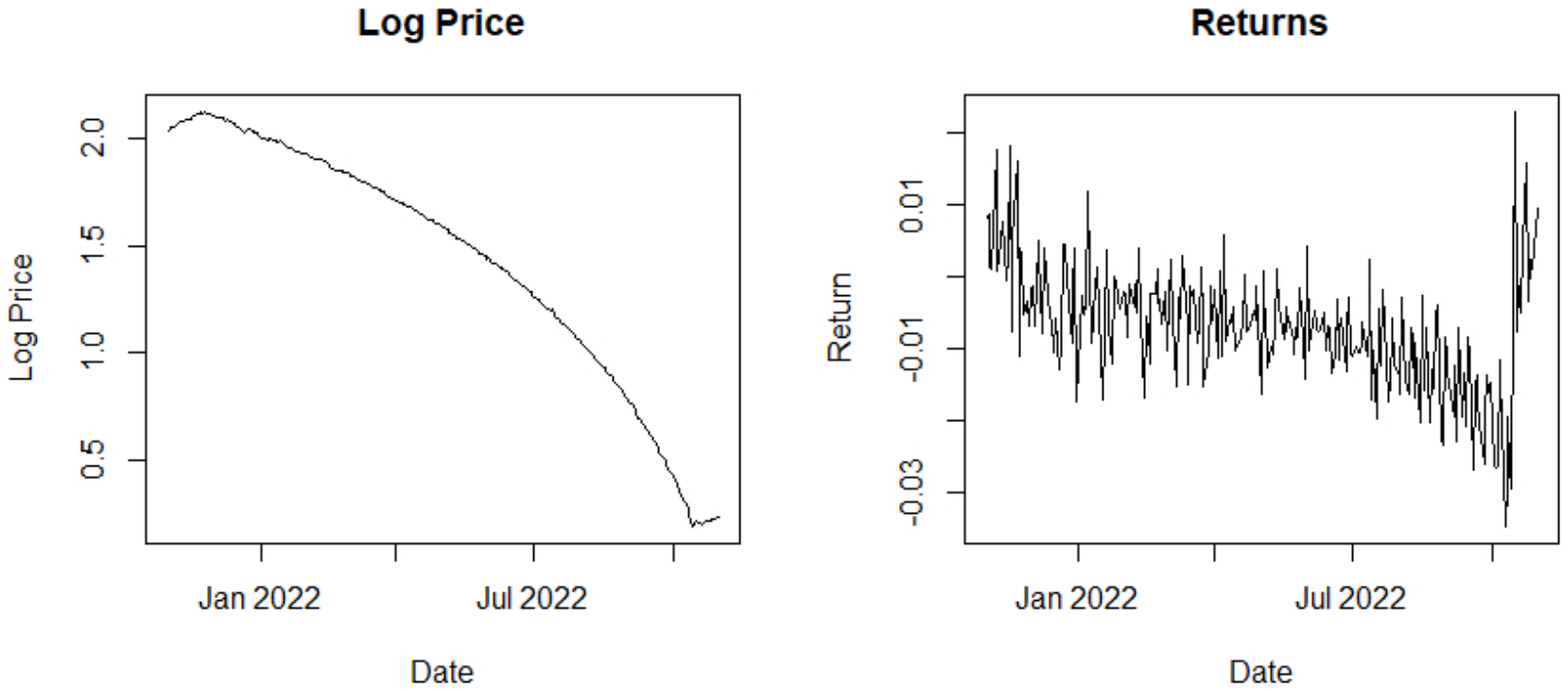}
    \caption{NVIDIA log price and returns (\(R_t=X_t-X_{t-1}\) where \(X_t\) is the log-price at time $t$).}
    \label{fig:price}
\end{figure}

\begin{table}[htbp]
\centering
\begin{tabular}{lcc}
\hline
Method & $\mu$ & $\nu^2$ \\
\hline
Score-based & -0.04 & 1.00 \\
Bayesian  & -0.038 & 0.90 \\
\hline
\end{tabular}
\caption{Parameter estimates for $\mu$ and $\nu^2$ using score-based and Bayesian methods.}\label{tab:parameter_estimates}
\end{table}

We summarize the main statistical properties of the empirical and simulated return series in Table \ref{tb:stats} and Table \ref{tb:quantiles}. The estimates obtained from the two methods do not appear to differ substantially; in this experiment, our focus is primarily on the intrinsic properties of the model.

Regarding the simulation of returns, the simulated mean and standard deviation track their empirical counterparts quite closely (see Table \ref{tb:stats}). However, the simulated returns exhibit a more pronounced negative skewness than the real data, which is nearly symmetric. This suggests that the subdiffusion mechanism may overstate downside risk by generating a heavier left tail, possibly as a consequence of the memory effects or non-Markovian waiting-time dynamics embedded in the model. This distortion is most clearly visible in the quantile comparison reported in Table \ref{tb:quantiles}: while the lower quantiles (1\% and 5\%) remain reasonably close to the empirical values, the upper tail (95\% and 99\%) is markedly compressed, with the simulated quantiles falling well below their real-data counterparts. Additionally, kurtosis estimates are somewhat inflated relative to the empirical value. This excess kurtosis indicates that the simulated distribution is more leptokurtic than the actual returns, implying that the model overcompensates for tail risk and produces more extreme observations than observed in the bearish regime.

 We consider the autocorrelation function (ACF); in the context of financial returns, the ACF of absolute (or squared) returns is of particular interest, as it captures the persistence of volatility shocks---commonly referred to as volatility clustering.
Figure~\ref{fig:ACF} reports the sample ACF of absolute returns for the empirical NVIDIA data and the two simulated series. Across lags $1$ through $20$, the sub-diffusion-driven model maintains ACFs that remain well above the $95\%$ confidence bounds (dashed red lines), closely tracking the persistent decay pattern observed in the real data. This indicates that the model successfully reproduces the long-memory property of volatility: large price movements tend to be followed by further large movements, and the influence of past shocks decays only gradually rather than instantaneously. Such behavior is a direct consequence of the non-Markovian, heavy-tailed waiting-time dynamics embedded in the sub-diffusion framework, which naturally generates temporal dependence in second moments without requiring additional ad-hoc specifications.

 Overall, our experiments demonstrate that this model is capable of capturing the characteristics of bear markets, as stated in \cite{zhang2025sub}, albeit with certain limitations. On the other hand, we also acknowledge the constraints of our experimental design; for instance, we did not conduct inference for all parameters, particularly $\alpha$, which may be addressed in future work.
\begin{table}[h]
\centering
\begin{minipage}{0.48\textwidth}
\centering
\small
\begin{tabular}{lccc}
\hline
Statistic & Real & Score-based & Bayesian \\
\hline
Mean     & -0.00695 & -0.00694 & -0.00864 \\
Std Dev  &  0.00831 &  0.00700 &  0.00894 \\
Skewness & -0.00709 & -1.38129 & -1.50982 \\
Kurtosis &  4.38377 &  5.67742 &  7.63829 \\
\hline
\end{tabular}
\caption{Summary statistics}\label{tb:stats}
\end{minipage}
\hfill
\begin{minipage}{0.48\textwidth}
\centering
\small
\begin{tabular}{lccc}
\hline
Quantile & Real & Score-based & Bayesian \\
\hline
1\%  & -0.02774 & -0.02800 & -0.03714 \\
5\%  & -0.02054 & -0.02035 & -0.02357 \\
50\% & -0.00683 & -0.00503 & -0.00661 \\
95\% &  0.00512 &  0.00101 &  0.00153 \\
99\% &  0.01669 &  0.00372 &  0.00480 \\
\hline
\end{tabular}
\caption{Quantile comparison}\label{tb:quantiles}
\end{minipage}
\label{tab:comparison}
\end{table}

\begin{figure}[h]
\centering
\includegraphics[width=0.9\textwidth,height=12cm]{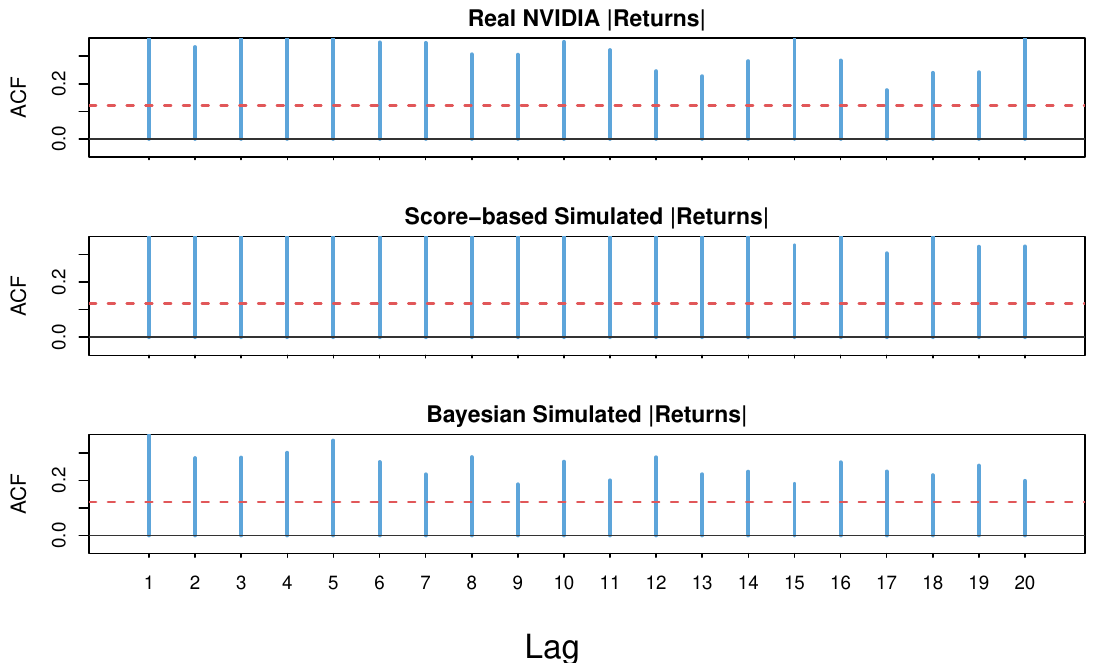}
\caption{Autocorrelation functions of absolute returns Top: real NVIDIA data; Middle: score-based simulation; Bottom: Bayesian simulation. Dashed red lines denote 95\% confidence bounds.}\label{fig:ACF}
\end{figure}




\appendix

\section{Proof of Main Result}\label{app:proof}

\subsection{Additional Assumption}

In order to give our additional assumption,  we note that by \cite[Theorem 3.1]{jum2014strong} one has that
for any $k\in\{1,\dots,T\}$ there exists a $C<+\infty$ such that
for any $(\theta,\varphi,\texttt{l},\texttt{q})\in\Theta\times\mathsf{B}_{\text{b,Lip}}(\Theta\times\mathbb{R}^{d_x})\times\mathbb{N}\times\{1,2\}$
\begin{equation}\label{eq:strong_weak}
\check{\mathbb{E}}_{\theta}\left[|\varphi(\theta,X_k^{\texttt{l}})-\varphi(\theta,X_k^{\texttt{l-1}})|^{\texttt{q}}\right]^{3-\texttt{q}} \leq C\max\{\|\varphi\|_{\text{Lip}},\|\varphi\|_{\text{Lip}}^2\}\Delta_{\texttt{l}}
\end{equation}
where 
$\|\varphi\|_{\text{Lip}}$ is the Lipschitz constant for $\varphi$ and
$\check{\mathbb{E}}_{\theta}$ is an expectation with respect to the law of the process coupled with the synchronous coupling at any level $\texttt{l}\in\mathbb{N}$ with $\theta$ and $x_0$ fixed.  We remark that the constant $C$ in the above could depend on $x_0$ but not on $\theta$.   We remark that the result of  \cite[Theorem 3.1]{jum2014strong} are easily extended to the autonoumous coefficient case of this paper.  Below $\text{Lip}(\mathsf{Z})$ are the collection of real-valued globally lipschitz functions on some space $\mathsf{Z}$.
We will suppose that the Markov kernels $K_{0}$ and $\check{K}_{\texttt{l}}$ operate on some measurable space
$(\mathsf{K}_0,\mathcal{K})$ and $(\mathsf{K}_{\texttt{l}},\mathcal{K}_{\texttt{l}})$ with invariant measures
$\Upsilon_0$ and $\Upsilon_{\texttt{l}}$.  
The abstract spaces and invariant measures are to ensure that one can use
advanced MCMC kernels on extended state-spaces,  but we note that the invariant measures must be related (or equal) to  the $\pi^0$ and $\pi^{\texttt{l}}$.

\begin{hypA}\label{ass:add}
\begin{enumerate}
\item{For $y\in\mathsf{Y}$,  there exists $0<\underline{C}<\overline{C}<+\infty$ such that for any $(\theta,x)\in\Theta\times\mathbb{R}^{d_x}$
$$
\underline{C}\leq G_{\theta}(x,y) \leq \overline{C}.
$$
For each $y\in\mathsf{Y}$,  $G_{\theta}(x,y)\in\text{Lip}(\Theta\times\mathbb{R}^{d_x})$.
}
\item{The constant $C(x_0)$ in \eqref{eq:strong_weak} is such that $\int_{\Theta}\int_{\mathbb{R}^{d_x}}C(x_0)
\mu((x_0)dx_0\overline{\pi}(\theta)d\theta<+\infty$.
}
\item{There exist a constant $C\in(0,1)$ and probability measures $\zeta_0,\zeta_{\texttt{l}}$ such that for
any $\tilde{u}_{0}\in\mathsf{K}_0$ and $\tilde{u}_{\texttt{l}}\in\mathsf{K}_{\texttt{l}}$, $\texttt{l}\in\mathbb{N}$
$$
K_0(\tilde{u}_{0},d\tilde{u}_0') \geq C\zeta_0(d\tilde{u}_0') \quad\text{and}\quad \check{K}_{\texttt{l}}(\tilde{u}_{\texttt{l}},d\tilde{u}_{\texttt{l}}') \geq C\zeta_{\texttt{l}}(d\tilde{u}_{\texttt{l}}').
$$
In addition $K_0$ and $\check{K}_{\texttt{l}}$ are $\Upsilon_0$ and $\Upsilon_{\texttt{l}}$ respectively,  $\texttt{l}\in\mathbb{N}$.
}
\item{For $\varphi\in\mathsf{B}_{\text{b,Lip}}(\Theta)$ there exists a $C<+\infty$ such that for any $\texttt{l}\in\mathbb{N}_0$
$$
|\mathbb{E}_{\pi^{\texttt{l}}}[\varphi(\theta)] - \mathbb{E}_{\pi}[\varphi(\theta)]|\leq C\Delta_{\texttt{l}}.
$$
}
\end{enumerate}
\end{hypA}

Assumptions (A\ref{ass:add}) 1.~\& 3.~are \cite[(A1) \& (A3)]{jasra}.  (A\ref{ass:add}) 2.~is related to \cite[(A2)]{jasra} except that the forward type rates are already established in \cite[Theorems 3.2, 3.3]{jum2014strong} and we simply need to ensure integrability of the constant,  which one expects relates to having at most polynomial moments of $\mu_{\theta}$.  (A\ref{ass:add}) 4.~is \cite[(A4)]{jasra} and in order to verify it one could start with adapting \cite[Theorem 3.3]{jum2014strong},  but one must be careful in trying to embed a marginal weak error result
into the context of our model,  so just as in \cite{jasra} we make an assumption;  we remark that by \eqref{eq:strong_weak} the result for an upper-bound of $\mathcal{O}(\Delta_{\texttt{l}}^{1/2})$ would be straight-forward to obtain,  but we believe this is not sharp.  
Throughout our proofs,  as was the case in \cite{jasra},  it is assumed all the initial distributions of the Markov chains are the invariant measures.  

\subsection{Proof of Theorem \ref{thm:2}}

To prove our main theorem we need the following technical lemma which is basically \cite[Lemma A.2.]{jasra}.
\begin{lem}\label{lem:lem1}
Assume (A\ref{assump:coefficients}),(A\ref{ass:add} 1.),  (A\ref{ass:add} 2.).  Then for
any $\varphi\in\mathsf{B}_{\text{\emph{b,Lip}}})(\Theta)$ there exists a $C<+\infty$ such
that for any $(\texttt{l},q)\in\mathbb{N}\times\{1,2\}$
$$
\mathbb{E}_{\pi^{\texttt{l}}}[|\varphi(\theta)\mathcal{G}_{\theta,\text{f}}(\mathbf{U}_{0:T}^{\texttt{l}})]-\varphi(\theta)\mathcal{G}_{\theta,\text{c}}(\mathbf{U}_{0:T}^{\texttt{l}})]|^{\texttt{q}}]^{3-\texttt{q}}\leq C\Delta_{\texttt{l}}.
$$
\end{lem}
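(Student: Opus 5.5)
The plan is to reduce the claim to a pathwise bound on the difference of the two weight products, and then use the strong-error estimate \eqref{eq:strong_weak} under the coupled law. Note that the expectation in the statement is really taken with respect to the coupled target $\check{\pi}^{\texttt{l}}$, since $\mathbf{U}_{0:T}^{\texttt{l}}$ carries both levels. Because $\varphi$ is bounded we have
$$
|\varphi(\theta)\mathcal{G}_{\theta,\text{f}}-\varphi(\theta)\mathcal{G}_{\theta,\text{c}}|\leq \|\varphi\|_\infty |\mathcal{G}_{\theta,\text{f}}(\mathbf{U}_{0:T}^{\texttt{l}})-\mathcal{G}_{\theta,\text{c}}(\mathbf{U}_{0:T}^{\texttt{l}})|,
$$
so it suffices to control the difference of the $\mathcal{G}$'s.

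\textbf{Step 1: telescoping.} Write $\mathcal{G}_{\theta,\text{f}}-\mathcal{G}_{\theta,\text{c}}=\prod_k a_k-\prod_k b_k$ with
$$
a_k=\frac{G_\theta(x_k^{\texttt{l}},y_k)}{\check{G}_\theta}, \qquad b_k=\frac{G_\theta(x_k^{\texttt{l}-1},y_k)}{\check{G}_\theta}.
$$
Both lie in $[\underline{C}/(2\overline{C}),1]$ by (A\ref{ass:add} 1.). The standard telescoping identity then gives
$$
\Big|\prod_k a_k-\prod_k b_k\Big|\leq \sum_{k=1}^T|a_k-b_k|\leq \sum_{k=1}^T \frac{|G_\theta(x_k^{\texttt{l}},y_k)-G_\theta(x_k^{\texttt{l}-1},y_k)|}{2\underline{C}}.
$$
The Lipschitz property of $G$ in (A\ref{ass:add} 1.) bounds the right-hand side by $C\sum_k|x_k^{\texttt{l}}-x_k^{\texttt{l}-1}|$. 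For $\texttt{q}=2$ we use $(\sum_k c_k)^2\leq T\sum_k c_k^2$.

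\textbf{Step 2: change of measure.} Expectations under $\check{\pi}^{\texttt{l}}$ are not expectations under the prior coupled law $\check{\nu}_{\theta,\texttt{l}}\otimes\overline{\pi}$. However, the Radon--Nikodym derivative between the two is
$$
\frac{\prod_k \check{G}_\theta}{\int \prod_k \check{G}_\theta\, d(\check{\nu}_{\theta,\texttt{l}}\otimes\overline{\pi})},
$$
which is bounded above by $(\overline{C}/\underline{C})^T$ uniformly in $\texttt{l}$ by (A\ref{ass:add} 1.). Hence
$$
\mathbb{E}_{\check{\pi}^{\texttt{l}}}[|\cdot|^{\texttt{q}}]\leq C\int_\Theta\int \check{\mathbb{E}}_{\theta}[|\cdot|^{\texttt{q}}]\,\mu_\theta(x_0)\,dx_0\,\overline{\pi}(\theta)\,d\theta.
$$
Here the inner expectation is under the synchronous coupling with $x_0$ and $\theta$ fixed. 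We take the coupling $\overline{\mu}_{\theta,\theta}$ to be the identical one, $x_0^{\texttt{l}}=x_0^{\texttt{l}-1}$, so \eqref{eq:strong_weak} applies.

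\textbf{Step 3: strong error.} Apply \eqref{eq:strong_weak} with the Lipschitz test function $x\mapsto G_\theta(x,y_k)$; equivalently, apply it coordinatewise to $x\mapsto x$ after truncation, though using $G$ directly avoids that because $G$ is bounded Lipschitz. This gives:
\begin{itemize}
\item for $\texttt{q}=2$, a bound $C(x_0)\Delta_{\texttt{l}}$;
\item for $\texttt{q}=1$, a bound on the squared first moment of $C(x_0)\Delta_{\texttt{l}}$.
\end{itemize}
Integrating over $x_0$ and $\theta$ and invoking (A\ref{ass:add} 2.) yields $C\Delta_{\texttt{l}}$ after the outer integral. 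For $\texttt{q}=1$ the square must be taken after the integral, so we use Jensen: $(\int \check{\mathbb{E}}_\theta|\cdot|)^2\leq \int(\check{\mathbb{E}}_\theta|\cdot|)^2\leq \int C(x_0)\Delta_{\texttt{l}}$.

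\textbf{Main obstacle.} The delicate point is Step 2 combined with Step 3. The strong rate \eqref{eq:strong_weak} is stated for the unconditioned coupled dynamics with a constant $C(x_0)$ that is only integrable, not bounded. The argument therefore relies on the bounded density ratio supplied by (A\ref{ass:add} 1.) to transfer the bound to the posterior, and on the ordering of the Jensen step when $\texttt{q}=1$.
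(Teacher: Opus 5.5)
Your proposal is correct and is essentially the paper's argument. The paper derives from \eqref{eq:strong_weak} and (A\ref{ass:add} 2.) an integrated strong-error bound that plays the role of \cite[(A2)]{jasra}, and then defers to the proof of \cite[Lemma A.2.]{jasra}; what you write out (telescoping the weight products, the bounded change of measure from $\check{\pi}^{\texttt{l}}$ to the coupled prior law via (A\ref{ass:add} 1.), and integrating the constant $C(x_0)$) is exactly that omitted argument, and your choice of the identity coupling of the initial states and your reading of the expectation as under $\check{\pi}^{\texttt{l}}$ are both implicit in the paper and necessary.
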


\begin{proof}
Using \eqref{eq:strong_weak} and (A\ref{ass:add} 2.) one can deduce a version of \cite[(A2)]{jasra} in the context of this paper.  From there the proof follows \cite[Lemma A.2.]{jasra} and is thus omitted.
\end{proof}

\begin{proof}[Proof of Theorem \ref{thm:2}]
The following proof follows similar ones in \cite{ml_cont,ml_mv_sde} adapted to the scenario of this article.
We start by noting that
\begin{equation}\label{eq:master}
\widehat{\mathbb{E}_{\pi^{\texttt{L}}}[\varphi(\theta)]}-\pi(\varphi) = \sum_{j=1}^3 \mathtt{T}_j
\end{equation}
where
\begin{eqnarray*}
\mathtt{T}_1 & = & \frac{1}{N_0+1}\sum_{n=0}^{N_0}\varphi(\theta_n^0) - \mathbb{E}_{\pi^0}[\varphi(\theta)]\\
\mathtt{T}_2 & = & \sum_{\texttt{l}=1}^{\texttt{L}}
\Bigg\{
\frac{\sum_{n=0}^{N_{\texttt{l}}}
\varphi(\theta_n^{\texttt{l}})\mathcal{G}_{\theta_n^{\texttt{l}},\text{f}}(\mathbf{u}^{n,\texttt{l}}_{0:T})
}{\sum_{n=0}^{N_{\texttt{l}}}\mathcal{G}_{\theta_n^{\texttt{l}},\text{f}}(\mathbf{u}^{n,\texttt{l}}_{0:T})} - 
\frac{\sum_{n=0}^{N_{\texttt{l}}}
\varphi(\theta_n^{\texttt{l}})\mathcal{G}_{\theta_n^{\texttt{l}},\text{c}}(\mathbf{u}^{n,\texttt{l}}_{0:T})
}{\sum_{n=0}^{N_{\texttt{l}}}\mathcal{G}_{\theta_n^{\texttt{l}},\text{c}}(\mathbf{u}^{n,\texttt{l}}_{0:T})}
-\left\{
 \mathbb{E}_{\pi^{\texttt{l}}}[\varphi(\theta)]-
\mathbb{E}_{\pi^{\texttt{l}-1}}[\varphi(\theta)]
\right\}
\Bigg\}\\
\mathtt{T}_3 & = &  \mathbb{E}_{\pi^{\texttt{L}}}[\varphi(\theta)]-  \mathbb{E}_{\pi}[\varphi(\theta)].
\end{eqnarray*}
By applying the $C_2-$inequality twice we can deal with the second moment of each of the three terms separately.

In the case of $\mathtt{T}_1$ standard results on reversible,  uniformly ergodic Markov chains yields that
\begin{equation}\label{eq:t1_bound}
\mathbb{E}[\mathtt{T}_1^2] \leq \frac{C}{N_0+1}
\end{equation}
where $C$ does not depend on $N_0$ but it does depend on $\varphi$.  From herein $C$ will denote a constant whose value will change from line-to-line but does not depend on $\texttt{l}\in\mathbb{N}$.

For  $\mathtt{T}_2$ we start by noting that
$$
 \mathbb{E}_{\pi^{\texttt{l}}}[\varphi(\theta)]-
\mathbb{E}_{\pi^{\texttt{l}-1}}[\varphi(\theta)]
= \frac{\mathbb{E}_{\check{\pi}^{\texttt{l}}}[\varphi(\theta)\mathcal{G}_{\theta,\text{f}}(\mathbf{U}_{0:T}^{\texttt{l}})]}
{\mathbb{E}_{\check{\pi}^{\texttt{l}}}[\mathcal{G}_{\theta,\text{f}}(\mathbf{U}_{0:T}^{\texttt{l}})]}  -
\frac{\mathbb{E}_{\check{\pi}^{\texttt{l}}}[\varphi(\theta)\mathcal{G}_{\theta,\text{c}}(\mathbf{U}_{0:T}^{\texttt{l}})]}
{\mathbb{E}_{\check{\pi}^{\texttt{l}}}[\mathcal{G}_{\theta,\text{c}}(\mathbf{U}_{0:T}^{\texttt{l}})]}.
$$
so we will write
$$
\mathtt{T}_2 = \sum_{\texttt{l}=1}^{\texttt{L}} \mathtt{D}_{\texttt{l}}
$$
where
$$
\mathtt{D}_{\texttt{l}} = \frac{\sum_{n=0}^{N_{\texttt{l}}}
\varphi(\theta_n^{\texttt{l}})\mathcal{G}_{\theta_n^{\texttt{l}},\text{f}}(\mathbf{u}^{n,\texttt{l}}_{0:T})
}{\sum_{n=0}^{N_{\texttt{l}}}\mathcal{G}_{\theta_n^{\texttt{l}},\text{f}}(\mathbf{u}^{n,\texttt{l}}_{0:T})} - 
\frac{\sum_{n=0}^{N_{\texttt{l}}}
\varphi(\theta_n^{\texttt{l}})\mathcal{G}_{\theta_n^{\texttt{l}},\text{c}}(\mathbf{u}^{n,\texttt{l}}_{0:T})
}{\sum_{n=0}^{N_{\texttt{l}}}\mathcal{G}_{\theta_n^{\texttt{l}},\text{c}}(\mathbf{u}^{n,\texttt{l}}_{0:T})}
- \frac{\mathbb{E}_{\check{\pi}^{\texttt{l}}}[\varphi(\theta)\mathcal{G}_{\theta,\text{f}}(\mathbf{U}_{0:T}^{\texttt{l}})]}
{\mathbb{E}_{\check{\pi}^{\texttt{l}}}[\mathcal{G}_{\theta,\text{f}}(\mathbf{U}_{0:T}^{\texttt{l}})]}  +
\frac{\mathbb{E}_{\check{\pi}^{\texttt{l}}}[\varphi(\theta)\mathcal{G}_{\theta,\text{c}}(\mathbf{U}_{0:T}^{\texttt{l}})]}
{\mathbb{E}_{\check{\pi}^{\texttt{l}}}[\mathcal{G}_{\theta,\text{c}}(\mathbf{U}_{0:T}^{\texttt{l}})]}.
$$
Then we have
\begin{equation}\label{eq:t2_master}
\mathbb{E}[\mathtt{T}_2^2] = \sum_{\texttt{l}=1}^{\texttt{L}} \mathbb{E}[\mathtt{D}_{\texttt{l}}^2] +
2\sum_{\texttt{l}=2}^{\texttt{L}}\sum_{\texttt{q}=1}^{\texttt{l}}\mathbb{E}[\mathtt{D}_{\texttt{l}}]\mathbb{E}[\mathtt{D}_{\texttt{q}}].
\end{equation}
To continue with our argument,  we recall the simple identity for real numbers $a,A,\dots,d,D$ with $A,B,C,D$ all
non-zero:
\begin{equation}
\begin{split}
    \frac{a}{A} - \frac{b}{B} - \frac{c}{C} + \frac{d}{D} =\,&\frac{1}{A}(a-b-c+d) -\frac{b}{AB}(A-B-C+D) - \frac{1}{AC}(A-C)(c-d)\\
    -\,&\frac{1}{AB}(C-D)(b-d)+\frac{d}{CBD}(B-D)(C-D) + \frac{d}{ACB}(A-C)(C-D). \\
\end{split}
\label{eq:abcd}
\end{equation}
\eqref{eq:abcd} can be used to bound $\mathbb{E}[\mathtt{D}_{\texttt{l}}^2]$ and $\mathbb{E}[\mathtt{D}_{\texttt{l}}]$ by using the $C_2-$inequality or the triangular inequality five times.  As most of the bounds of these terms are similar we only consider terms of the type $(a-b-c+d)/A$ and $(A-C)(c-d)/(AC)$ as 
bounding the other terms have similar proofs.  We begin with $\mathbb{E}[\mathtt{D}_{\texttt{l}}^2]$  and the corresponding $(a-b-c+d)/A$ term which is
$$
\mathbb{E}\left[
\left(
\frac{
\frac{1}{N_{\texttt{l}}+1}
\sum_{n=0}^{N_{\texttt{l}}}\left\{
\varphi(\theta_n^{\texttt{l}})\mathcal{G}_{\theta_n^{\texttt{l}},\text{f}}(\mathbf{u}^{n,\texttt{l}}_{0:T})
-
\varphi(\theta_n^{\texttt{l}})\mathcal{G}_{\theta_n^{\texttt{l}},\text{c}}(\mathbf{u}^{n,\texttt{l}}_{0:T})\right\}-
\left\{
\mathbb{E}_{\check{\pi}^{\texttt{l}}}[\varphi(\theta)\mathcal{G}_{\theta,\text{f}}(\mathbf{U}_{0:T}^{\texttt{l}})-
\varphi(\theta)\mathcal{G}_{\theta,\text{c}}(\mathbf{U}_{0:T}^{\texttt{l}})]
\right\}}
{\tfrac{1}{N_{\texttt{l}}+1}\sum_{n=0}^{N_{\texttt{l}}}\mathcal{G}_{\theta_n^{\texttt{l}},\text{f}}(\mathbf{u}^{n,\texttt{l}}_{0:T})}
\right)^2
\right].
$$
By (A\ref{ass:add} 1.) the denominator is lower-bounded by a deterministic constant and the resulting expression can be controlled by using \cite[Proposition A.1.]{jasra} along with Lemma \ref{lem:lem1} to deduce the upper-bound:
$$
\frac{C\Delta_{\texttt{l}}}{N_{\texttt{l}}+1}.
$$
For the $(A-C)(c-d)/(AC)$ term we note that again the denominator is lower-bounded by a deterministic constant 
so one has to control a term of the type
$$
C\mathbb{E}_{\check{\pi}^{\texttt{l}}}[\varphi(\theta)\mathcal{G}_{\theta,\text{f}}(\mathbf{U}_{0:T}^{\texttt{l}})-
\varphi(\theta)\mathcal{G}_{\theta,\text{c}}(\mathbf{U}_{0:T}^{\texttt{l}})]^2
\mathbb{E}\left[\left(
\frac{1}{N_{\texttt{l}}+1}\sum_{n=0}^{N_{\texttt{l}}}\mathcal{G}_{\theta_n^{\texttt{l}},\text{f}}(\mathbf{u}^{n,\texttt{l}}_{0:T}) - \mathbb{E}_{\check{\pi}^{\texttt{l}}}[\varphi(\theta)\mathcal{G}_{\theta,\text{f}}(\mathbf{U}_{0:T}^{\texttt{l}})]
\right)^2\right].
$$
We can apply Lemma \ref{lem:lem1} to the left expression and standard results for uniformly ergodic Markov chains giving an upper-bound of the type  $C\Delta_{\texttt{l}}/(N_{\texttt{l}}+1)$.  Thus our arguments are enough to obtain that
\begin{equation}\label{eq:dl_bound}
\mathbb{E}[\mathtt{D}_{\texttt{l}}^2] \leq \frac{C\Delta_{\texttt{l}}}{N_{\texttt{l}}+1}.
\end{equation}
For the case of $\mathbb{E}[\mathtt{D}_{\texttt{l}}]$ and the $(a-b-c+d)/A$ term we can write this as
$$
\mathbb{E}\Bigg[
\left\{\frac{1}{\tfrac{1}{N_{\texttt{l}}+1}\sum_{n=0}^{N_{\texttt{l}}}\mathcal{G}_{\theta_n^{\texttt{l}},\text{f}}(\mathbf{u}^{n,\texttt{l}}_{0:T})}-
\frac{1}{\mathbb{E}_{\check{\pi}^{\texttt{l}}}[\mathcal{G}_{\theta,\text{f}}(\mathbf{U}_{0:T}^{\texttt{l}})]}
\right\}\times
$$
$$
\Bigg\{
\frac{1}{N_{\texttt{l}}+1}
\sum_{n=0}^{N_{\texttt{l}}}\left\{
\varphi(\theta_n^{\texttt{l}})\mathcal{G}_{\theta_n^{\texttt{l}},\text{f}}(\mathbf{u}^{n,\texttt{l}}_{0:T})
-
\varphi(\theta_n^{\texttt{l}})\mathcal{G}_{\theta_n^{\texttt{l}},\text{c}}(\mathbf{u}^{n,\texttt{l}}_{0:T})\right\}-
\left\{
\mathbb{E}_{\check{\pi}^{\texttt{l}}}[\varphi(\theta)\mathcal{G}_{\theta,\text{f}}(\mathbf{U}_{0:T}^{\texttt{l}})-
\varphi(\theta)\mathcal{G}_{\theta,\text{c}}(\mathbf{U}_{0:T}^{\texttt{l}})]
\right\}\Bigg\}
\Bigg]
$$
as the Markov chain is started in stationarity.  Then one can use Cauchy-Schwarz and standard results for uniformly ergodic Markov chains for the left term and \cite[Proposition A.1.]{jasra} along with Lemma \ref{lem:lem1}  to the right term to give an upper-bound of the type $C\Delta_{\texttt{l}}^{1/2}/(N_{\texttt{l}}+1)$.  Then for the  $(A-C)(c-d)/(AC)$ using a similar argument one has that in expectation it is equal to
$$
\mathbb{E}_{\check{\pi}^{\texttt{l}}}[\varphi(\theta)\mathcal{G}_{\theta,\text{f}}(\mathbf{U}_{0:T}^{\texttt{l}})-
\varphi(\theta)\mathcal{G}_{\theta,\text{c}}(\mathbf{U}_{0:T}^{\texttt{l}})]\frac{1}{\mathbb{E}_{\check{\pi}^{\texttt{l}}}[\mathcal{G}_{\theta,\text{f}}(\mathbf{U}_{0:T}^{\texttt{l}})]}\times
$$
$$
\mathbb{E}\Bigg[
\left\{\frac{1}{\tfrac{1}{N_{\texttt{l}}+1}\sum_{n=0}^{N_{\texttt{l}}}\mathcal{G}_{\theta_n^{\texttt{l}},\text{f}}(\mathbf{u}^{n,\texttt{l}}_{0:T})}-
\frac{1}{\mathbb{E}_{\check{\pi}^{\texttt{l}}}[\mathcal{G}_{\theta,\text{f}}(\mathbf{U}_{0:T}^{\texttt{l}})]}
\right\}\left\{
\frac{1}{N_{\texttt{l}}+1}\sum_{n=0}^{N_{\texttt{l}}}\mathcal{G}_{\theta_n^{\texttt{l}},\text{f}}(\mathbf{u}^{n,\texttt{l}}_{0:T}) - \mathbb{E}_{\check{\pi}^{\texttt{l}}}[\mathcal{G}_{\theta,\text{f}}(\mathbf{U}_{0:T}^{\texttt{l}})]
\right\}
\Bigg].
$$
The top line can be dealt with by using Lemma \ref{lem:lem1}  and (A\ref{ass:add} 1.)  and the second line by 
Cauchy-Schwarz and standard results for uniformly ergodic Markov chains giving an upper-bound of the type
$C\Delta_{\texttt{l}}^{1/2}/(N_{\texttt{l}}+1)$.  As a result  we have 
\begin{equation}\label{eq:dl_bias_bound}
\mathbb{E}[\mathtt{D}_{\texttt{l}}] \leq \frac{C\Delta_{\texttt{l}}^{1/2}}{N_{\texttt{l}}+1}.
\end{equation}
Combining \eqref{eq:dl_bound} and \eqref{eq:dl_bias_bound} with \eqref{eq:t2_master} yields that 
\begin{equation}\label{eq:t2_bound}
\mathbb{E}[\texttt{T}_2^2] \leq C
\left(\sum_{\texttt{l}=1}^{\texttt{L}}\frac{ \Delta_{\texttt{l}} }{N_{\texttt{l}}+1}
+ \sum_{\texttt{l}=2}^{\texttt{L}}\sum_{\texttt{q}=1}^{\texttt{l}-1}
\frac{ \Delta_{\texttt{l}}^{1/2}}{N_{\texttt{l}}+1} \frac{ \Delta_{\texttt{q}}^{1/2}}{N_{\texttt{q}}+1}\right).
\end{equation}

For $\texttt{T}_3$ one can apply (A\ref{ass:add} 4.)
\begin{equation}\label{eq:bias_bound}
\mathbb{E}[\texttt{T}_3^2] \leq C\Delta_{\texttt{L}}^2.
\end{equation}
The proof is now completed by noting \eqref{eq:master},  applying the $C_2-$inequality twice and using
\eqref{eq:t1_bound},  \eqref{eq:t2_bound} and \eqref{eq:bias_bound} which concludes the proof.
\end{proof}

\end{document}